\newtheorem{theorem}{Theorem}[section]
\newtheorem{lemma}[theorem]{Lemma}
\newtheorem{conjecture}[theorem]{Conjecture}
\newtheorem{claim}[theorem]{Claim}
\newtheorem{problem}[theorem]{Problem}
\author[C.J. Casselgren et. al]{Carl Johan Casselgren\affiliationmark{1}\thanks{Research supported by a grant from the Swedish Research council VR
(2017-05077)}
  \and Jonas B. Granholm\affiliationmark{1}
  \and Fikre B. Petros\affiliationmark{2}}
\title[Extending partial edge colorings of iterated cartesian products]{Extending partial edge colorings of iterated
cartesian products of cycles and paths}
\affiliation{
  Link\"opings Universitet, Sweden\\
  Addis Ababa University, Ethiopia}
\keywords{Precoloring extension, Edge coloring, 
Cartesian product, List coloring}
\begin{document}
\publicationdata{vol. 26:2}{2024}{5}{10.46298/dmtcs.11377}{2023-05-25; 2023-05-25; 2024-03-08}{2024-04-19}

\maketitle
\begin{abstract}We consider the problem of extending partial edge colorings of iterated cartesian
products of even cycles and paths, focusing on the case when the precolored
edges satisfy either an Evans-type condition or is a matching.
In particular, we prove that if $G=C^d_{2k}$ is the $d$th power of the
cartesian product of the even cycle $C_{2k}$ with itself, and at most
$2d-1$ edges of $G$ are precolored, then there is a proper $2d$-edge coloring
of $G$ that agrees with the partial coloring. We show that the same conclusion
holds, without restrictions on the number of precolored edges, if any two
precolored edges are at distance at least $4$ from each other.
For odd cycles of length at least $5$, we prove that if $G=C^d_{2k+1}$ 
is the $d$th power of the
cartesian product of the odd cycle $C_{2k+1}$ with itself ($k\geq2$), and at most
$2d$ edges of $G$ are precolored, then there is a proper $(2d+1)$-edge coloring
of $G$ that agrees with the partial coloring.
Our results generalize previous ones on 
precoloring extension of hypercubes
[Journal of Graph Theory 95 (2020) 410--444].
\end{abstract}

\section{Introduction}

	An {\em (edge) precoloring} (or {\em partial edge coloring})
	of a graph $G$ is a proper edge coloring of some
	subset $E' \subseteq E(G)$; {\em a $t$-edge precoloring}
	is such a coloring with $t$ colors.
	A  $t$-precoloring $\varphi$ of $G$ is
	{\em extendable} if there is a proper $t$-edge coloring $f$ of $G$
	such that $f(e) = \varphi(e)$ for any edge $e$ that is colored
	under $\varphi$; $f$ is called an {\em extension}
	of $\varphi$.
	In general, the problem of extending a given edge precoloring
	is an $\mathcal{NP}$-complete problem,
	already for $3$-regular bipartite graphs \cite{EastonParker, Fiala}.
	
	Edge precoloring extension problems seem to have been 
	first considered in connection with the problem of completing partial
	Latin squares and the well-known Evans' conjecture 
	that every $n \times n$ partial Latin square with at most $n-1$ non-empty
	cells is completable to a Latin square \cite{Evans}. By a well-known 
	correspondence, the problem of completing
	a partial Latin square is equivalent to asking if a partial 
	edge coloring with 
	$\Delta(G)$ colors of a balanced complete bipartite graph $G$ is extendable
	to a $\Delta(G)$-edge coloring, where $\Delta(G)$ 
	as usual denotes the maximum degree.
	 Evans' conjecture was proved for large $n$ by H\"aggkvist \cite{Haggkvist78},
	and in full generality by Andersen and Hilton \cite{AndersenHilton}, and, 
	independently, by
	Smetaniuk \cite{Smetaniuk}.
	
	Another early reference on
	edge precoloring extension is \cite{MarcotteSeymour}, where the authors 
	study the problem from the viewpoint of polyhedral combinatorics.
	More recently, the problem of extending a precoloring of
	a matching has been considered
	in \cite{EGHKPS}.
	In particular, it is conjectured that for every graph $G$,
	if $\varphi$ is an edge precoloring of a matching $M$ in $G$
	using $\Delta(G)+1$ colors,
	and any two edges in $M$ 
	are at distance at least $2$ from each other,  then $\varphi$ 
	can be extended to a proper $(\Delta(G)+1)$-edge coloring of $G$;
	here,
	by the {\em distance} between two
	edges $e$ and $e'$ we mean the number of edges in a 
	shortest path between an endpoint of $e$ and an endpoint of $e'$; 
	a {\em distance-$t$ matching} is a matching where any
	two edges are at distance at least $t$ from each other.
	In \cite{EGHKPS}, it is proved that this conjecture holds
	for e.g. bipartite multigraphs and subcubic multigraphs, and
	in \cite{GiraoKang} it is proved that a version of the conjecture with the
	distance increased to 9 holds for general graphs.

	Quite recently, with motivation
	from results on completing partial Latin squares,
	questions on extending partial edge colorings
	of $d$-dimensional hypercubes $Q_d$ were studied 
	in \cite{CasselgrenMarkstromPham}. 
	Among other things, a characterization of
	partial edge colorings with at most $d$ precolored edges that are extendable
	to $d$-edge colorings of $Q_d$ is obtained, thereby establishing an analogue
	for hypercubes of the characterization by
	Andersen and Hilton \cite{AndersenHilton} of 
	$n \times n$ partial Latin 
	squares with at most $n$ non-empty cells that are completable to Latin squares.
	In particular, every partial $d$-edge coloring with at most $d-1$ colored edges
	is extendable to a $d$-edge coloring of $Q_d$.
	This line of investigation was continued in 
	\cite{CasselgrenPetros, CasselgrenPetros2} where similar questions are
	investigated for trees.
	
	In \cite{CasselgrenPetrosFufa}, similar questions are investigated for
	{\em cartesian products} of graphs.
	The {\em cartesian product} $G \square H$ of two graphs $G$ and $H$ is
	the graph with vertex set $V(G \square H) = \{(u,v) : u \in V(G), v \in V(H)\}$,
	and where $(u,v)$ is adjacent to $(u',v')$ if and only if $u=u'$ and $vv' \in E(H)$,
	or $uu' \in E(G)$ and $v=v'$.

	In \cite{CasselgrenPetrosFufa}, Evans-type edge precoloring extension results
	are obtained for the cartesian products of complete and complete bipartite graphs
	with $K_2$, respectively, 
	as well as for the product of $K_2$ with graphs of small maximum
	degree and trees. Moreover, similar results for the cartesian product of
	$K_2$ with a general regular (triangle-free) graph, where the precolored edges are
	required to be independent, were obtained.
	
	In this paper, we continue the study of questions on precoloring extension
	of cartesian products of graphs with a focus on 
	iterated cartesian products of graphs.
	Denote by $G^d$ the $d$th power of the cartesian product of $G$ with itself.
	We pose the following question.
	
	\begin{problem}
	\label{prob:general}
		Let $G$ be a graph where every precoloring of at 
		most $\chi'(G)-k$ edges, where $k \geq 1$,
		can be extended to a proper $\chi'(G)$-edge coloring. Is it true that every
		precoloring of at most $\chi'(G^d)-k$ edges of $G^d$ can be extended to a
		$\chi'(G^d)$-edge coloring of $G^d$?
	\end{problem}
		The result of \cite{CasselgrenMarkstromPham} for hypercubes 
		deals with the case when $G=K_2$ (as well as $G=C_4$), so
		a positive answer to Problem \ref{prob:general} would be a far-reaching
		generalization of this result.
	
In this paper, we study Problem \ref{prob:general} for graphs with maximum
degree two. We verify that it has a positive answer for even as well as for
odd cycles of length at least $5$, 
and therefore also for paths. 
The case of odd cycles of length $3$ appears to be more difficult, and it remains
an open problem whether Problem \ref{prob:general} has a positive answer in this case.

Even though any partial edge coloring of an odd cycle is extendable,
we shall restrict ourselves to the case when at most $\chi'(G)-1$ edges in a graph
$G$ are precolored, since for all connected graphs except odd cycles and stars, there
are examples of
partial edge colorings with $\chi'(G)$ precolored edges that are not extendable.
In fact, in \cite{CasselgrenPetrosFufa} it was proved that every partial 
$\chi'(G)$-edge coloring of $G$ is extendable if and only $G$ is isomorphic to a star $K_{1,n}$ 
or an odd cycle.

For even cycles, we additionally prove that any precoloring of a distance-$4$
matching in $C^d_{2k}$ is extendable to a proper $2k$-edge coloring.
Here the argument relies heavily on the fact that $C^d_{2k}$ is Class 1, and
we do not know whether a similar result hold for odd cycles.


\section{Preliminaries}

Before we prove our results, let us introduce some terminology and auxiliary
results.

If $\varphi$ is an edge precoloring of $G$ 
and an edge $e$ is colored under $\varphi$,
then we say that $e$ is {\em $\varphi$-colored}.
A color $c$ {\em appears} at a vertex $v$ under $\varphi$ if
there is an edge incident with $v$ that is colored $c$; otherwise, $c$ is
{\em missing} at $v$.

If the edge coloring $\varphi$ uses $t$ colors and
$1 \leq a,b \leq t$, then a path or cycle in
$G$ is called {\em $(a,b)$-colored under $\varphi$} if
its edges are colored by colors $a$ and $b$ alternately.
We also say that such a path or cycle is \emph{bicolored under $\varphi$}.
By switching colors
$a$ and $b$ on a maximal $(a,b)$-colored path or an $(a,b)$-colored cycle,
we obtain another proper $t$-edge coloring of $G$;
this operation is called an {\em interchange} or a {\em swap}.
	
In the above definitions, we often leave out the 
reference to an explicit coloring $\varphi$, if the coloring
is clear from the context.

If $G_1$ and $G_2$ are subgraphs of $G$, and $f_i$ is a proper edge coloring
of $G_i$, then we say that $f_1$ {\em has no conflicts} with $f_2$ if
no vertex is incident with two edges $e_1$ and $e_2$ such that $f_1(e_1) = f_2(e_2)$.

By construction, $G=C^d_{r}$ decomposes into $d$ subgraphs in terms of its edges, 
each consisting of
$r^{d-1}$ disjoint copies of $C_r$; these subgraphs are called {\em dimensions}.
Each subgraph of a dimension which is isomorphic to $C_r$ is called a {\em layer},
and each component of $G-E(D)$, where $D$ is a dimension, is called a {\em plane} 
of $G$. If $d=2$, then layers and planes are identical objects.

\begin{figure} [H]
\centering
\includegraphics[scale=1.0]{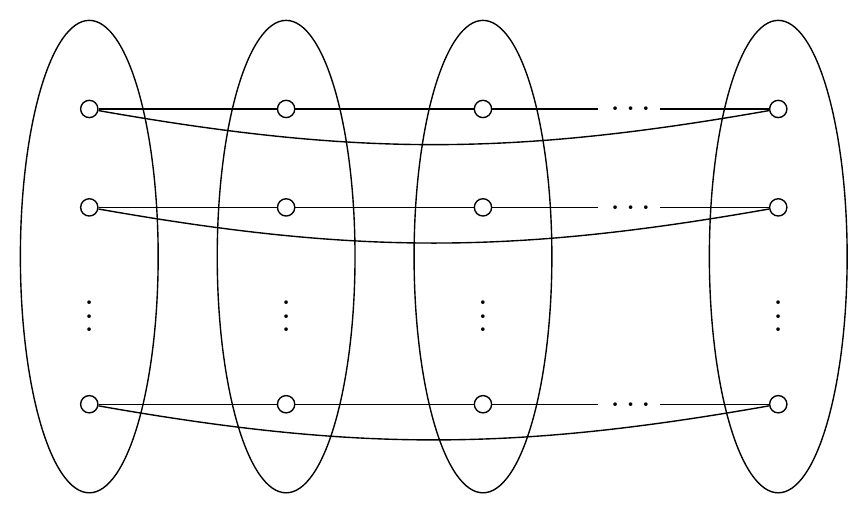}
\caption{An illustration of dimensions, layers and planes. Each cycle forms a {\em layer},
all the cycles together form a {\em dimension}, and the components obtained by removing all the edges from the cycles are the {\em planes}.}
\label{fig22}
\end{figure}

In Figure \ref{fig22}, the edge-induced subgraph consisting of all vertices and
drawn edges form a dimension, each cycle is a layer, and each connected component
in the subgraph obtained by removing all drawn edges is a plane.

Two planes are {\em adjacent} if there is an edge with endpoints
in both planes. Similarly an edge $e$ not contained in a plane is 
{\em incident} to the plane if one endpoint of $e$ is contained in the plane,
and we say that a layer edge is {\em between} two planes if it is incident with both
planes.

Two vertices of two distinct planes are {\em corresponding} if they are joined
by an edge; similarly for edges. Given edge colorings of two distinct planes,
we say that the planes are colored {\em correspondingly} if corresponding edges
have the same color.

We shall also need some standard definitions on list edge coloring.
	Given a graph $G$, assign to each edge $e$ of $G$ a set
	$\mathcal{L}(e)$ of colors.
	Such an assignment $\mathcal{L}$ is called
	a \emph{list assignment} for $G$ and
	the sets $\mathcal{L}(e)$ are referred
	to as \emph{lists} or \emph{color lists}.
	If all lists have equal size $k$, then $\mathcal{L}$
	is called a \emph{$k$-list assignment}.
	Usually, we seek a proper
	edge coloring $\varphi$ of $G$,
	such that $\varphi(e) \in \mathcal{L}(e)$ for all
	$e \in E(G)$. If such a coloring $\varphi$ exists then
	$G$ is \emph{$\mathcal{L}$-colorable} and $\varphi$
	is called an \emph{$\mathcal{L}$-coloring}.
	Denote by $\chi'_L(G)$ the minimum integer $t$
	such that $G$ is $\mathcal{L}$-colorable
	whenever $\mathcal{L}$ is a $t$-list assignment.
	If $\chi'_L(G) \leq t$, then $G$ is {\em $t$-edge-choosable}.
	The following lemmas are well-known and easy to prove.

\begin{lemma}
	\label{lem:evencycle}
	Every even cycle is $2$-edge-choosable.
\end{lemma}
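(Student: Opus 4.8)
The plan is to prove that every even cycle $C_{2k}$ is $2$-edge-choosable by exhibiting an explicit $\mathcal{L}$-coloring for an arbitrary $2$-list assignment $\mathcal{L}$. Label the edges consecutively $e_1, e_2, \dots, e_{2k}$ around the cycle, so that $e_i$ and $e_{i+1}$ share a vertex (indices modulo $2k$), and write $\mathcal{L}(e_i) = \{a_i, b_i\}$. The first thing I would check is the degenerate case where all lists are equal, say $\mathcal{L}(e_i) = \{a, b\}$ for every $i$; then, since $2k$ is even, the alternating coloring $e_1, e_3, e_5, \dots \mapsto a$ and $e_2, e_4, \dots \mapsto b$ is a proper $\mathcal{L}$-coloring. (This is precisely the case where parity matters, and it is also why odd cycles fail to be $2$-edge-choosable.)

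Next I would handle the generic case: suppose not all lists coincide, so there are two consecutive edges with distinct lists, and by relabelling I may assume $\mathcal{L}(e_1) \neq \mathcal{L}(e_{2k})$ — that is, some color lies in $\mathcal{L}(e_1) \setminus \mathcal{L}(e_{2k})$. Temporarily delete the common vertex of $e_1$ and $e_{2k}$, leaving the path $e_1 e_2 \cdots e_{2k}$. A path is always greedily list-colorable from a $2$-list assignment: color $e_1$ with any color in its list, and having colored $e_{i-1}$, color $e_i$ with a color in $\mathcal{L}(e_i)$ different from the color of $e_{i-1}$ — possible since $|\mathcal{L}(e_i)| = 2$. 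The only thing that can go wrong when we re-insert the deleted vertex is a conflict between $e_1$ and $e_{2k}$. To avoid this, I would run the greedy procedure in the order $e_1, e_2, \dots, e_{2k}$, but first fix the color of $e_1$ to be some $c \in \mathcal{L}(e_1) \setminus \mathcal{L}(e_{2k})$; then no matter what color $e_{2k}$ eventually receives from its own list, it cannot equal $c$, so there is no conflict at the shared vertex of $e_1$ and $e_{2k}$, and the conflict at the other endpoint of $e_{2k}$ (shared with $e_{2k-1}$) is already avoided by the greedy choice.

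The main (and only real) obstacle is the all-equal-lists case, since that is the single configuration where naive greedy coloring around the cycle can fail to close up — and it is resolved precisely by the evenness of the cycle, which guarantees that the forced two-coloring of the path is consistent at the closing vertex. Everything else is routine greedy coloring on a path together with the observation that having a color available at $e_1$ that is forbidden at $e_{2k}$ decouples the two edges meeting at their common vertex. I would present the argument as a short case split (all lists equal versus not), each case being a couple of lines.
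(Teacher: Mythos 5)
Your proof is correct. The paper states this lemma without proof (it is dismissed as ``well-known and easy to prove''), and your argument is the standard one: handle the all-lists-equal case by the alternating $2$-coloring (this is where evenness is used), and otherwise cut the cycle at a vertex where two consecutive edges have distinct lists, fix the first edge's color to lie outside the last edge's list, and colour greedily along the resulting edge ordering. The only cosmetic wrinkle is the phrase about ``deleting the common vertex'' of $e_1$ and $e_{2k}$ --- deleting that vertex would remove both edges; what you actually do (and what works) is simply to colour the edges greedily in the cyclic order starting from $e_1$, which your subsequent sentences describe correctly. It is worth noting that your second case uses no parity at all, so it simultaneously proves the paper's Lemma~\ref{lem:oddcycle} (an odd cycle is $L$-colourable from any $2$-list assignment whose lists are not all identical); the two lemmas really have a common proof, split only by whether the all-equal configuration is colourable.
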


\begin{lemma}
\label{lem:oddcycle}
	If $L$ is a $2$-list assignment for the edges of an odd cycle $C$, then
	$C$ is $L$-colorable, unless all lists are identical.
\end{lemma}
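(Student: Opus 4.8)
The plan is to reduce the cyclic list-colouring problem to a greedy colouring of a path, with one edge singled out to absorb the "wrap-around" constraint. Write the cycle as $C = v_1 v_2 \cdots v_n v_1$ with $n$ odd, and set $e_i = v_i v_{i+1}$ for $1 \le i \le n$, indices taken modulo $n$. The first step is to use the hypothesis that the lists are not all identical. If we had $L(e_i) = L(e_{i+1})$ for every $i$, then transitively all lists would coincide; hence there is some index $i$ with $L(e_i) \ne L(e_{i+1})$. After cyclically relabelling the vertices we may assume this occurs at $v_1$, that is, $L(e_n) \ne L(e_1)$.

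Next I would record an elementary observation: since $|L(e_1)| = |L(e_n)| = 2$ and $L(e_1) \ne L(e_n)$, neither list is contained in the other, so both $L(e_1) \setminus L(e_n)$ and $L(e_n) \setminus L(e_1)$ are nonempty. Choose a colour $\gamma \in L(e_1) \setminus L(e_n)$ and set $\varphi(e_1) = \gamma$.

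Finally, colour the remaining edges greedily in the order $e_2, e_3, \ldots, e_n$: having coloured $e_{j-1}$, pick $\varphi(e_j) \in L(e_j) \setminus \{\varphi(e_{j-1})\}$, which is possible since $|L(e_j)| = 2$. By construction $\varphi(e_{j-1}) \ne \varphi(e_j)$ for $2 \le j \le n$, so properness of $\varphi$ can only fail at $v_1$, whose incident edges are $e_n$ and $e_1$. But $\varphi(e_n) \in L(e_n)$ while $\varphi(e_1) = \gamma \notin L(e_n)$, so $\varphi(e_n) \ne \varphi(e_1)$, and $\varphi$ is a proper $L$-colouring of $C$.

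The argument is short, and I do not anticipate a genuine obstacle; the only step requiring any thought is the first one, namely arguing that non-identical lists force two \emph{consecutive} edges to differ. After that, the oddness of the cycle plays no role — indeed the same proof works for any cycle once two adjacent edges have distinct lists, which is why the "unless all lists are identical" clause is only needed here (for even cycles the stronger Lemma~\ref{lem:evencycle} applies regardless). The one point of care is the bookkeeping at $v_1$: choosing $\varphi(e_1)$ from $L(e_1) \setminus L(e_n)$ rather than arbitrarily is exactly what makes the closing constraint automatic.
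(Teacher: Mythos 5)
Your proof is correct: the reduction to two consecutive edges with distinct lists, the choice of $\varphi(e_1)\in L(e_1)\setminus L(e_n)$, and the greedy completion along the path together verify every adjacency in the cycle, including the wrap-around at $v_1$. The paper states this lemma without proof as "well-known and easy," and your argument is precisely the standard one being alluded to.
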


We shall also use the well-known proposition that paths are edge-list colorable from 
a list assignment where every edge except the first one gets a list of size
at least two.


\section{Extension of $2d-1$ precolored edges of $C^d_{2k}$}

In this section, we prove the following theorem.

\begin{theorem}
\label{th:evencycle}
	If $G=C^d_{2k}$ is the $d$th power of the cartesian product of the even cycle
	$C_{2k}$ with itself, and $\varphi$ is a proper partial edge coloring of 
	$G$ with at most $2d-1$ precolored edges, then $\varphi$ can be extended
	to a proper $2d$-edge coloring of $G$.
\end{theorem}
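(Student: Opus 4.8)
The plan is to induct on $d$. For $d=1$ we have $G=C_{2k}$, and the claim is that any partial proper edge coloring of $C_{2k}$ with at most $1$ precolored edge extends to a proper $2$-edge coloring; this is immediate from Lemma~\ref{lem:evencycle} (assign each uncolored edge the list $\{1,2\}$ and each precolored edge the singleton of its color — a $2$-list assignment on the cycle minus possibly one edge, which is still easily seen to be colorable). Now suppose $d\geq 2$ and the statement holds for $d-1$. Write $G=C^d_{2k}$ and fix a dimension $D$; removing $E(D)$ leaves $2k$ disjoint copies $P_1,\dots,P_{2k}$ of $C^{d-1}_{2k}$, the planes, arranged cyclically, with perfect matchings of layer edges between consecutive planes $P_i$ and $P_{i+1}$. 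The at most $2d-1$ precolored edges split into those lying inside planes and those lying in dimension $D$ (the layer edges). The key counting observation is that the $2k$ planes cannot all be heavily loaded: since $2d-1 = 2(d-1)+1 < 2k\cdot(2(d-1)+1)$ as soon as $k\geq 1$, by pigeonhole there is at least one plane, and in fact we will want a stronger statement — that we can choose the dimension $D$ and an index $i_0$ so that a consecutive block of planes is lightly precolored.

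The heart of the argument is to process the planes one at a time around the cycle. I would first color a plane $P_{i_0}$ chosen so that it contains at most $2(d-1)-1$ precolored interior edges together with at most one additional "demand" coming from a precolored layer edge on its boundary (which we treat as a precolored interior edge after the swap bookkeeping); the inductive hypothesis then gives a proper $2d$-edge coloring of $P_{i_0}$ — more precisely a $(2d-2)$-coloring of the $(2k)$-regular... wait, $C^{d-1}_{2k}$ is $(2d-2)$-regular, so induction yields a proper $(2d-2)$-edge coloring using colors from our palette of $2d$. Having colored $P_{i_0}$, I propagate: each layer edge between $P_{i_0}$ and its neighbour $P_{i_0+1}$ must avoid the colors already used at its $P_{i_0}$-endpoint, and each interior edge of $P_{i_0+1}$ must eventually avoid the colors of the layer edges at its endpoints; the standard way to make this work is to color $P_{i_0+1}$ *correspondingly* to $P_{i_0}$ (using the inductive coloring transported across the matching) and then fix up conflicts. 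The two "spare" colors $2d-1,2d$ are exactly what is needed here: layer edges can be colored alternately with colors outside the $(2d-2)$-coloring of the planes, and local repairs near the finitely many precolored edges are done by Kempe-type interchanges on bicolored paths, using Lemmas~\ref{lem:evencycle} and \ref{lem:oddcycle} on the individual cycles (all layers are even cycles, and the cross-plane cycles through two layer edges are also even).

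The main obstacle — and where most of the work will go — is the bookkeeping at the planes that actually carry precolored edges, and the closing-up of the cycle of planes: after coloring $P_1,\dots,P_{2k}$ in order, the layer edges between $P_{2k}$ and $P_1$ must simultaneously avoid already-fixed colors at both ends, which is a genuine constraint because the plane colorings at the two ends were produced independently. I expect to handle this by arranging that all but a bounded number of the planes receive *the same* coloring (so the closing matching is between two identically-colored planes and can be given a single alternating pair of colors), and confining all the irregularity — the precolored interior edges, the precolored layer edges, and the necessary Kempe repairs — to a short consecutive arc of planes whose length is bounded in terms of the distances between precolored edges; the hypothesis $|E(\varphi)|\leq 2d-1$ guarantees such an arc exists after a suitable choice of dimension $D$. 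Throughout, one uses that $C^d_{2k}$ is Class~$1$ (so a proper $2d$-edge coloring exists at all) as the base scaffold, and argues that the finitely many precolored edges can be absorbed by local modifications of this scaffold.
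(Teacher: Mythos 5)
Your skeleton matches the paper's: induction on $d$, splitting $G$ into a dimension $D$ and $2k$ planes isomorphic to $C^{d-1}_{2k}$, coloring planes with $2d-2$ colors via the inductive hypothesis, coloring planes ``correspondingly'' so the layer edges can be handled by list-coloring even cycles, and repairing precolored layer edges by swaps on bicolored $4$-cycles. But the proposal stops exactly where the work begins, and two of the steps you gesture at would fail as stated. First, the plan to ``confine all the irregularity to a short consecutive arc of planes whose length is bounded in terms of the distances between precolored edges'' is not available: the hypothesis bounds only the \emph{number} of precolored edges, not their mutual distances, and for $2k$ small relative to $d$ (e.g.\ $k=2$, $d$ large) the $2d-1$ precolored edges can occupy every plane, so no lightly-loaded arc exists. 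The paper handles the spread-out case quite differently: it pairs adjacent planes $Q_{2j-1},Q_{2j}$ (using that the number of planes is even), observes that each pair misses at least two colors, extends each pair with $2d-2$ colors avoiding a reserved color $c_1$ and a pair-specific color $c'_j$, and colors the matching between $Q_{2j-1}$ and $Q_{2j}$ with $c'_j$ and all remaining dimension edges with $c_1$. Second, your inductive hypothesis only covers planes with at most $2d-3$ precolored edges, yet a single plane may carry all $2d-1$ of them; the necessary device (delete two colors from the precoloring, extend with the remaining $2d-2$ colors, then restore the deleted colors) does not appear in your proposal, and without it the induction simply does not apply in that case.

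A further gap is the choice of the dimension $D$ itself. Your pigeonhole remark shows nothing useful; the relevant count is that some dimension contains at most one precolored edge, and the argument then branches on whether there is an empty dimension, a dimension whose few precolored edges carry colors unused elsewhere (so those colors can be reserved for $D$), or neither. In the last case one must precolor carefully chosen \emph{helper} edges $e_1,e_2$ in the two planes adjacent to the lone precolored dimension edge $e$ with the color $\varphi(e)$, extend, and then swap on the $4$-cycle through $e,e_1,e_2$ to force $e$ to receive its color --- the existence of suitable uncolored helper edges is itself a counting argument using the regularity of the planes. None of this is in the proposal, and the ``closing up the cycle of planes'' difficulty you correctly identify is precisely resolved by these case-specific constructions rather than by a generic local-repair argument. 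As written, the proposal is a plausible plan sharing the paper's architecture, but it is not yet a proof.
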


As mentioned in the introduction, every connected graph except odd cycles and stars
have a partial edge coloring with $\chi'(G)$ precolored edges that is not extendable.
Thus, since $\chi'(G) =2d$, the bound on the number of precolored edges here is best possible.

\begin{proof}[Proof of Theorem \ref{th:evencycle}]
	The proof  proceeds by induction on $d$, the case $d=1$
	being trivial.
	We shall prove a series of lemmas that together will imply the theorem.
	In the proofs of these lemmas we shall consider a specified dimension $D_1$, and
	 the subgraph $G-E(D_1)$ consisting of $2k$ planes
	$Q_1, \dots, Q_{2k}$, where $Q_i$ is adjacent to $Q_{i+1}$ 
	(here, and in the following,
	indices are taken modulo $2k$). 

	We shall assume that every precoloring of a plane of
	$G-E(D_1)$ with at most $2d-3$ precolored
	edges is extendable to a proper edge coloring using $2d-2$ colors, and  prove that
	a given precoloring $\varphi$ of $G$ with at most $2d-1$ precolored edges is extendable to
	a proper $2d$-edge coloring of $G$.
		
		We shall distinguish between the following cases, each of which
		is dealt with in a lemma below.

	\begin{itemize}
		
		\item There is a dimension of $G$ that contains no precolored edges.
		
		\item Each dimension of $G$ contains precolored edges, 
	and there is a dimension
		with at most two precolored edges, the colors of which do not appear on
		edges in any other dimension of $G$.
		
		\item Every dimension of $G$
		contains edges with colors that also appear on edges in another dimension,
		or at least three precolored edges.
		\end{itemize}
	\end{proof}
	\pagebreak
	\begin{lemma}
	\label{cl:oneempty}
		If there is a dimension of $G$ that contains no precolored edges, then $\varphi$
		is extendable.
	\end{lemma}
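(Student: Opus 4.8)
The plan is to exploit the freedom of a completely uncolored dimension $D$ to absorb the colors of the at most $2d-1$ precolored edges of $G$. First I would isolate $D$: since $D$ contains no $\varphi$-colored edge, every precolored edge lies in one of the planes $Q_1,\dots,Q_{2k}$ of $G-E(D)$. Each plane $Q_i$ is isomorphic to $C^{d-1}_{2k}$, so by the inductive hypothesis of the theorem (which, as the proof setup states, we may assume in the form that every precoloring of a plane with at most $2d-3$ edges extends to a proper $(2d-2)$-edge coloring), I can extend $\varphi$ restricted to each plane that contains a precolored edge.

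The key step is to colour all $2k$ planes \emph{correspondingly} with $2d-2$ colours $\{1,\dots,2d-2\}$, using the same coloring on each plane, in a way that is consistent with $\varphi$. Concretely: collect all precolored edges; since they all live in the planes and correspond, across different planes, to a set of at most $2d-1$ edges of a single copy of $C^{d-1}_{2k}$ — wait, the issue is two precolored edges of $G$ may be \emph{corresponding} edges in different planes with different colours, which would obstruct a uniform colouring. I would handle this by a cleaner route: pick one plane, say $Q_1$, and push the colours of all precolored edges onto it. If two precolored edges of $G$ correspond to the same edge of $C^{d-1}_{2k}$ but have different colours, I instead only demand that the \emph{first} plane in a suitable linear order carries a valid extension and build the rest by propagation along $D$, recolouring as I go. The fully corresponding approach works when no two precolored edges are corresponding; otherwise I order the planes $Q_1,\dots,Q_{2k}$ so that a plane containing a precolored edge comes first in each "column" and colour plane by plane, at each step choosing for the new plane a proper $(2d-2)$-colouring agreeing with its own precolored edges (inductive hypothesis) — this is always at most $2d-3$ per plane unless all $2d-1$ precolored edges sit in one plane, which is again directly covered by induction since $2d-1 > 2d-3$; in that single remaining case I note $2d-1$ precoloured edges in one plane need not extend, so I would instead split them: at most $2d-3$ of them can be forced into one plane by first recolouring along $D$, pushing two of the colours out into $D$.

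With all planes $Q_1,\dots,Q_{2k}$ properly coloured from $\{1,\dots,2d-2\}$, it remains to colour the edges of $D$. The edges of $D$ between consecutive planes $Q_i$ and $Q_{i+1}$ form, together, $2k$ copies of $C_{2k}$ (the layers of $D$), and each such layer edge $e=(u,v)$ with $u\in Q_i$, $v\in Q_{i+1}$ must avoid the $2d-2$ colours already present at $u$ in $Q_i$ and at $v$ in $Q_{i+1}$. But at every vertex of $G$ exactly two colours from $\{1,\dots,2d\}$ are still missing after the planes are coloured — all vertices have degree $2d$ and each sits on exactly $2d-2$ coloured plane-edges — so each layer edge of $D$ has a list of available colours of size at least $2$, with lists coming from the two extra colours $\{2d-1,2d\}$ together with whatever colours happen to be missing at its endpoints. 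If the two planes are coloured correspondingly, then along each layer of $D$ the available lists are all equal to $\{2d-1,2d\}$ (or contain it), and since each layer is an even cycle $C_{2k}$, Lemma~\ref{lem:evencycle} guarantees a proper list colouring, independently for each layer; this yields a proper $2d$-edge colouring of $G$ extending $\varphi$.

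\textbf{The main obstacle} is exactly the bookkeeping in the middle step: arranging the plane-colourings so that the layer edges of $D$ end up with $2$-element lists on even cycles, which is automatic only when adjacent planes are coloured correspondingly, but corresponding colouring can clash with $\varphi$ when two precoloured edges correspond. I expect the real content of the lemma to be a careful choice — probably choosing the "reference" plane to be one containing a precoloured edge, extending there by induction, and then showing that either the remaining precoloured edges can be accommodated correspondingly, or else one first performs a Kempe-type swap along $D$ to reduce the number of distinct forced colours in any single column to at most the $2d-3$ the inductive hypothesis can handle. Everything after that — the final list-colouring of $D$ via Lemma~\ref{lem:evencycle} — is routine.
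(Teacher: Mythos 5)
Your high-level strategy (extend the precoloring inside the planes of $G-E(D_1)$ by induction, then list-color the layers of $D_1$ via Lemma~\ref{lem:evencycle}) is the same as the paper's, and you correctly identify the central obstacle: making adjacent planes compatible so that every edge of $D_1$ ends up with a usable list on an even cycle. However, there are two genuine gaps in your execution. First, your claim that each plane contains at most $2d-3$ precolored edges ``unless all $2d-1$ precolored edges sit in one plane'' is false: with exactly two planes involved, one plane can carry $2d-2$ precolored edges (and the other a single one), which exceeds what the inductive hypothesis handles. The paper deals with this by an explicit color-removal trick: pick a color $c_2$ appearing in the heavy plane, erase it from the precoloring (dropping the count to at most $2d-3$), extend with $2d-2$ colors avoiding $c_2$ and a globally unused color $c_1$, and then restore $c_2$ on the erased edges; your suggestion of ``pushing two of the colours out into $D$'' is not a defined operation and does not substitute for this.

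Second, your final list-coloring step tacitly assumes all planes are colored from a fixed palette $\{1,\dots,2d-2\}$ so that every layer edge of $D_1$ gets the list $\{2d-1,2d\}$. But the precolored edges may use up to $2d-1$ distinct colors of $\{1,\dots,2d\}$, so no fixed $(2d-2)$-palette works, and when two or more planes carry precolored edges they generally cannot all be colored correspondingly. This is exactly where the paper's case analysis does its work: when at least two planes are involved it abandons the uniform list-coloring of whole layers of $D_1$ and instead pairs up consecutive planes $Q_{2j-1},Q_{2j}$, colors each pair correspondingly with a palette avoiding a globally unused color $c_1$ and a pair-specific color $c'_j$, colors the edges between paired planes by the color missing at their (corresponding) endpoints, and colors all remaining edges of $D_1$ by $c_1$. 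Your ``propagation along $D$, recolouring as I go'' gestures at this but never specifies how the $D_1$ edges between non-correspondingly colored planes are to be colored, so the proof is incomplete at its crux.
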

	\begin{proof}
	Suppose that $D_1$ is a dimension in $G$ that contains no precolored edges,
	and consider the subgraph $G-E(D_1)$.
		
	Suppose first that all precolored edges are contained in one plane, say $Q_1$.
	Let $c_1$ and $c_2$ be two colors used by $\varphi$ (if just one color appears
	under $\varphi$, then $c_2$ is any color from $\{1,\dots,2d\} \setminus \{c_1\}$).
	From the restriction of $\varphi$ to $Q_1$, we define an edge 
	precoloring $\varphi'$ of $Q_1$ by removing the colors $c_1$ and $c_2$ 
	from any edge of $Q_1$ $\varphi$-colored by these colors.
	Then, by the induction hypothesis, $\varphi'$
	is extendable
	to a $(2d-2)$-coloring of $Q_1$ using colors 
	$\{1,\dots, 2d\} \setminus \{c_1,c_2\}$. Next we recolor the edges 
	$\varphi$-precolored $c_1$ and $c_2$ by these colors, and thereafter
	color all other planes correspondingly.
	Thus, we can 
	define a list assignment $L$ for the edges of $D_1$,
	by for each edge $e\in E(D_1)$, letting $L(e)$ be the set of all colors
	from $\{1,\dots,2d\}$ that do not appear on edges that are adjacent to $e$.
	By Lemma \ref{lem:evencycle}, we
	can properly color the edges of $D_1$ from these lists to obtain a proper coloring
	that has no conflicts with the coloring of $G-E(D_1)$, 
	and thus  $\varphi$ is extendable.
		
		\bigskip
		
	Next, we consider the case when exactly two planes, say $Q_1$ and $Q_i$
	contain all precolored edges. Since at most $2d-1$ colors appear under
	$\varphi$, there is a color $c_1 \in \{1,\dots, 2d\}$ that is not used by $\varphi$.
	Furthermore, let $c_2$ be a color appearing on some edge in the plane
	with the largest number of precolored edges, say $Q_1$.
	Let $\varphi'$ be the coloring obtained from $\varphi$ by removing color $c_2$
	from any edge colored $c_2$ under $\varphi$.
	Then the restrictions of $\varphi'$ to $Q_1$ and $Q_i$, respectively,
	are extendable to proper $(2d-2)$-edge colorings using colors
	$\{1,\dots, 2d\} \setminus \{c_1,c_2\}$.
	By recoloring any edge $\varphi$-colored $c_2$
	by the color $c_2$, we obtain proper edge colorings $f_1$ and $f_i$
	of $Q_1$ and $Q_i$, respectively.
		
	Now, either $i\neq 2$ or $i \neq 2k$; suppose the former holds.
	Then we color $Q_2$ correspondingly to how $Q_1$ is colored under $f_1$,
	and we color all other uncolored $Q_j$'s correspondingly to how $Q_i$ is colored
	under $f_i$.
	Now, since $Q_{2j-1}$ and $Q_{2j}$ are colored correspondingly,
	for every edge $e$ with one endpoint in $Q_{2j-1}$ and one endpoint in $Q_{2j}$,
	there is a color $\{1,\dots, 2d\} \setminus \{c_1\}$ that does not appear
	at an endpoint of $e$. Thus, by coloring all such edges by such a color
	and then coloring all other edges of $D_1$ by the color $c_1$, 
	we obtain an extension of $\varphi$.
		
		\bigskip
		
		Lastly, let us consider the case when at least three planes contain
		all precolored edges. As before, let $c_1$ be a color that is not 
		used by $\varphi$.
		Since at least three planes contain precolored edges, 
		each plane contains at most $2d-3$ precolored edges, and
		two adjacent planes
		contain precolored edges from at most $2d-2$ colors. 
		This implies that for each $j=1,\dots,k$  there is a color $c'_j$ in
		$\{1,\dots, 2d\} \setminus \{c_1\}$ that is not used in the restriction of $\varphi$ to
		$Q_{2j-1}$ and $Q_{2j}$.
		Thus for $j=1, \dots, k$, we can extend the restriction of
		$\varphi$ to $Q_{2j-1}$ and $Q_{2j}$ using the $2d-2$ colors in
		$\{1,\dots, 2d\} \setminus \{c_1,c'_j\}$. 
		For $j=1,\dots k$, we then color all edges of $D_1$
		between $Q_{2j-1}$ and $Q_{2j}$ by $c'_j$, and all other edges of $D_1$ by
		the color $c_1$. This yields an extension of $\varphi$.
	\end{proof}
	
	\begin{lemma}
	\label{cl:noempty2free}
		If each dimension of $G$ contains precolored edges and there is a dimension
		with at most two precolored edges, the colors of which do not appear on
		edges in any other dimension of $G$,
		then $\varphi$ is extendable.
	\end{lemma}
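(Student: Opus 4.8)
The setup: we have $G = C^d_{2k}$, a precoloring $\varphi$ with at most $2d-1$ precolored edges, and every dimension contains at least one precolored edge. There is a dimension, which we may take to be $D_1$, with at most two precolored edges whose colors do not appear anywhere outside $D_1$. Write $c_1$ (and possibly $c_2$) for these colors. The plan is to decompose along $D_1$ into planes $Q_1,\dots,Q_{2k}$ and exploit the fact that the colors $c_1,c_2$ are "reserved" for the $D_1$-edges, so the $2d-2$ colors $\{1,\dots,2d\}\setminus\{c_1,c_2\}$ suffice to color each plane.

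\begin{proof}
	We may assume $D_1$ is the dimension guaranteed by the hypothesis; let $c_1$ (and, if there are two precolored edges in $D_1$, also $c_2$) denote the colors of its precolored edges, and note that these colors do not appear on any precolored edge outside $D_1$. If only one edge of $D_1$ is precolored, pick $c_2$ to be any color of $\{1,\dots,2d\}$ distinct from $c_1$ and not appearing on a precolored edge in some plane chosen below; in any case set $C^\ast = \{1,\dots,2d\}\setminus\{c_1,c_2\}$, so $|C^\ast| = 2d-2$.

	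Since every dimension other than $D_1$ contains a precolored edge and there are at most $2d-1$ precolored edges in total, at most $2d-2$ of them lie outside $D_1$; hence at most $2d-2$ planes of $G-E(D_1)$ contain a precolored edge, and each plane $Q_i$ contains at most $2d-3$ precolored edges (if some plane had $2d-2$, then $D_1$ would contain no precolored edge, a contradiction; one argues similarly using that other dimensions are nonempty). Moreover no precolored edge outside $D_1$ uses a color from $\{c_1,c_2\}$, so the restriction of $\varphi$ to each plane $Q_i$ is a partial edge coloring with at most $2d-3$ precolored edges, all using colors from $C^\ast$. By the induction hypothesis applied inside each plane (which is a copy of $C^{d-1}_{2k}$), each such restriction extends to a proper edge coloring of $Q_i$ using the $2d-2$ colors of $C^\ast$.

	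The core of the argument is to choose these extensions so that the $D_1$-edges can be colored afterwards using only $\{c_1,c_2\}$ together with slack left in $C^\ast$. Pair the planes as $(Q_{2j-1},Q_{2j})$ for $j=1,\dots,k$. For each pair, extend the precoloring on one plane of the pair arbitrarily (via the induction hypothesis), then color the other plane of the pair \emph{correspondingly}, i.e. so that corresponding edges receive the same $C^\ast$-color; this is possible whenever at most one of the two planes in a pair carries precolored edges, which we can arrange provided the precolored edges outside $D_1$ can be distributed so that each pair $(Q_{2j-1},Q_{2j})$ meets precolored edges in at most one of its two planes. If some pair is "bad" (both planes precolored), we instead use that each pair together carries at most $2d-3$ precolored edges in at most $2d-2$ colors, so a common color $c'_j \notin\{c_1,c_2\}$ is missing in both planes of that pair; extend both planes using $C^\ast\setminus\{c'_j\}$, reserving $c'_j$ for the $D_1$-edges between them.

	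Finally we color $D_1$. For a pair colored correspondingly, every edge of $D_1$ between $Q_{2j-1}$ and $Q_{2j}$ has a missing color in $C^\ast$ (the color missing at both of its corresponding endpoints, which is the same color), so we can color these $D_1$-edges from $C^\ast$; for a bad pair we color the $D_1$-edges between the two planes by the reserved color $c'_j$. The remaining $D_1$-edges — those between $Q_{2j}$ and $Q_{2j+1}$ — are colored by $c_1$, except that we recolor the one or two originally $\varphi$-precolored edges of $D_1$ back to their colors $c_1,c_2$; since $c_1,c_2$ appear nowhere in $G-E(D_1)$ this creates no conflict, and since those precolored edges lie in $D_1$ we only need to check they are not adjacent to other $D_1$-edges of the same color, which we ensure by choosing the parity of the "$c_1$-coloring" of $D_1$ to avoid the $\varphi$-precolored $D_1$-edges (possible because each layer of $D_1$ is an even cycle and carries at most two precolored edges). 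This produces a proper $2d$-edge coloring of $G$ extending $\varphi$.
\end{proof}

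The step I expect to be the main obstacle is the bookkeeping in the second half: guaranteeing that the precolored edges outside $D_1$ can be distributed among the $k$ plane-pairs so that "bad" pairs behave, and simultaneously that the two special $D_1$-edges precolored $c_1,c_2$ sit in a layer of $D_1$ in a way compatible with extending the $c_1$-coloring along the even cycles of $D_1$. One likely needs a careful case split on how many precolored edges $D_1$ has (one or two) and where they sit relative to the plane decomposition, mirroring the three-part structure already used in the proof of Lemma~\ref{cl:oneempty}; the even-cycle choosability Lemma~\ref{lem:evencycle} is the tool that rescues the last coloring step when the naive "color everything $c_1$" clashes with a precolored $D_1$-edge.
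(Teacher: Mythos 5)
There is a genuine gap, and it sits exactly where the paper's proof does its real work. Your scheme colors every plane with the fixed palette $C^\ast=\{1,\dots,2d\}\setminus\{c_1,c_2\}$ of size $2d-2$. But each plane is $(2d-2)$-regular, so in any proper edge coloring of a plane with exactly the colors of $C^\ast$, \emph{every} color of $C^\ast$ appears at \emph{every} vertex. Hence your claim that a within-pair edge of $D_1$ ``has a missing color in $C^\ast$ at both endpoints'' is false, and your fallback for bad pairs is also impossible: $C^\ast\setminus\{c'_j\}$ has only $2d-3$ colors, too few to properly color a $(2d-2)$-regular plane. The only colors genuinely left over for $E(D_1)$ are $c_1$ and $c_2$, so your construction silently reduces to: extend the restriction of $\varphi$ to $D_1$ to a proper $2$-coloring of $D_1$ with $\{c_1,c_2\}$. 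That is not always possible (two precolored edges of $D_1$ in the same layer whose colors and positions are incompatible with an alternating $2$-coloring of that even cycle), and no choice of ``pairing parity'' repairs it. This obstruction is precisely why the paper splits into the case where $\varphi|_{D_1}$ \emph{is} $\{c_1,c_2\}$-extendable (then your idea works and is what the paper does) and the case where it is not; in the latter the paper observes that $D_1$ then has two precolored edges, so a third unused color $c_3$ exists, and it colors the plane segments between the two precolored $D_1$-edges with \emph{different} forbidden pairs ($\{c_1,c_3\}$ versus $\{c_2,c_3\}$, etc.), coloring the $D_1$-paths alternately with $c_3$ and one of $c_1,c_2$. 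That segment-dependent choice of palettes is the missing idea.

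A secondary error: you assert each plane of $G-E(D_1)$ has at most $2d-3$ precolored edges ``else $D_1$ would be empty.'' If $D_1$ has exactly one precolored edge, a single plane can carry all remaining $2d-2$ precolored edges; your induction hypothesis (at most $2d-3$ edges extendable with $2d-2$ colors) then does not apply. The paper treats this configuration separately, extending that plane with the $2d-1$ colors of $\{1,\dots,2d\}\setminus\{c_1\}$, copying it to all planes, and finishing $D_1$ with Lemma~\ref{lem:evencycle}.
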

	\begin{proof}
		Assume first that there is a dimension $D_1$ containing only one precolored
		edge and that there is a plane $Q_1$ in $G-E(D_1)$ containing all other 
		precolored edges. Suppose that the precolored edge of $D_1$ is colored $c_1$.
		As in the proof of the preceding lemma, 
		there is an extension of the restriction of $\varphi$ to $Q_1$
		using colors $\{1,\dots, 2d\} \setminus \{c_1\}$.
		Next, we color all other planes of $G-E(D_1)$ correspondingly,
		which implies that every edge of $D_1$ is adjacent to edges of $2d-2$
		different colors, so by Lemma \ref{lem:evencycle},
		$\varphi$ is extendable.
		
	Let us now consider the case when there is no such dimension containing only one
	precolored edge and a plane containing all other precolored edges.
	Let $D_1$ be a dimension containing at most two edges that are precolored
	by colors not appearing on any other edges under $\varphi$.
	Our assumption implies that every plane in $G-E(D_1)$ contains at most $2d-3$ 
		precolored edges, and that there are two colors $c_1, c_2$ that do not appear
		on any edge in $G-E(D_1)$, and at most two edges of $D_1$ are colored
		by colors from $\{c_1,c_2\}$.
		
		Suppose first that there is an extension of the restriction of $\varphi$
		to $D_1$ using colors $c_1$ and $c_2$.
		Since every plane in $G-E(D_1)$ contains at most $2d-3$ precolored edges,
		every plane has a proper edge coloring using colors 
		$\{1,\dots, 2d\} \setminus \{c_1,c_2\}$ that agrees with $\varphi$.
		Hence, $\varphi$ is extendable.
		
		Suppose now that the restriction of $\varphi$ to $D_1$ is not extendable
		using colors $c_1$ and $c_2$. Then there are at least two precolored edges in
		$D_1$, and so $G-E(D_1)$ contains at most $2d-3$ precolored edges and there
		is a color $c_3 \in \{1,\dots, 2d\} \setminus \{c_1,c_2\}$ that does not appear on any edge under
		$\varphi$.
		
		Since the restriction of $\varphi$ to $D_1$ is not extendable, there are
		planes $Q_1, Q_{2}, \dots, Q_i$ in $G-E(D_1)$ such that
		$Q_1$ and $Q_i$ are incident with precolored edges of $D_1$ and
		$Q_2,\dots, Q_{i-1}$ are not.
		Without loss of generality we assume that $Q_1$ is incident with an edge of
		$D_1$ that is precolored $c_1$.
		We take an extension of $Q_1$ using colors 
		$\{1, \dots, 2d\} \setminus \{c_1,c_3\}$,
		and for $j=2,\dots, i-1$, we take an extension of the restriction of $\varphi$
		to $Q_j$ using colors $\{1, \dots, 2d\} \setminus \{c_2,c_3\}$. 
		Moreover, we color
		every path in $D_1$ with vertices in $Q_1,\dots, Q_i$ using colors
		$c_3$ and $c_2$ alternately, and starting with $c_3$ at $Q_1$.
		
		If $Q_i$ is incident with an edge of $D_1$ colored $c_2$, then
		$i$ is even, and
		we take
		extensions of the restriction of $\varphi$ to $Q_i$ and $Q_{i+1}$ using colors
		$\{1, \dots, 2d\} \setminus \{c_2,c_3\}$, and
		for
		$j=i+2, \dots, 2d$,
		we take extensions of the restrictions of $\varphi$
		to $Q_j$ using colors
		$\{1, \dots, 2d\} \setminus \{c_1,c_3\}$.
		Moreover, we color every path in $D_1$ with vertices in 
		$Q_{i+1},\dots, Q_{2k}$ using colors
		$c_3$ and $c_1$ alternately, and starting with $c_3$ at $Q_{i+1}$.
		Finally, we color all edges between $Q_1$ and $Q_{2k}$ by the color $c_1$,
		and all edges between $Q_i$ and $Q_{i+1}$ by the color $c_2$.
		This yields an extension of $\varphi$.
		
		If $Q_i$ is incident with an edge colored $c_1$, then $i$ is odd,
		and we proceed
		similarly, but take extensions of the restrictions of $\varphi$ to
		$Q_i$ and 
		$Q_{i+1}$ using colors $\{1, \dots, 2d\} \setminus \{c_1,c_2\}$,
		for $j=i+2,\dots 2k-1$, we 
		take extensions of the restrictions of $\varphi$
		to $Q_j$ using colors
		$\{1, \dots, 2d\} \setminus \{c_2,c_3\}$, and an extension of the 
		restriction of $\varphi$
		to $Q_{2k}$ using colors $\{1,\dots, 2d\} \setminus \{c_1,c_3\}$.
		We then color the paths of $D_1$ so that the resulting coloring
		is proper and agrees with $\varphi$.
	\end{proof}
	
	\begin{lemma}
	\label{cl:noemptynofree}
		If each dimension of $G$
		contains edges with colors that also appear on edges in another dimension,
		or at least three precolored edges,
		then $\varphi$ is extendable.
	\end{lemma}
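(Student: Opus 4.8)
The plan is to imitate the strategy of Lemmas~\ref{cl:oneempty} and~\ref{cl:noempty2free}: single out a convenient dimension $D_1$, pass to the planes $Q_1,\dots,Q_{2k}$ of $G-E(D_1)$, extend $\varphi$ on each plane with the induction hypothesis, colour the planes in corresponding pairs, and finish by colouring the disjoint even cycles making up $D_1$ using Lemma~\ref{lem:evencycle}. Since every dimension now contains a precolored edge and there are at most $2d-1$ of them among $d$ dimensions, some dimension $D_1$ carries \emph{exactly one} precolored edge $e_1$; put $c_1=\varphi(e_1)$. As $D_1$ has fewer than three precolored edges, the hypothesis of the lemma forces $e_1$ to be the edge whose colour recurs, so $c_1$ also appears on a precolored edge of another dimension, and hence on an edge lying inside some plane. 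I would also fix a colour $c_0\neq c_1$ not used by $\varphi$ (possible since $\varphi$ uses at most $2d-1$ colours). Finally, observe that since $\varphi$ is proper, $c_1$ is missing at both endpoints of $e_1$, and this has to be preserved throughout.

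Next I would split into cases according to the number of planes of $G-E(D_1)$ carrying precolored edges, exactly as in the two preceding lemmas. If at least three planes are involved, each plane has at most $2d-3$ precolored edges, and the induction hypothesis extends $\varphi$ on every plane; if precolored edges lie in exactly two planes, each again has at most $2d-3$; and if all (at most $2d-2$) precolored edges lie in a single plane, say $Q_1$, then $c_1$ occurs in $Q_1$ by the first paragraph, so I first delete the colour $c_1$ from $\varphi|_{Q_1}$, obtaining a precoloring with at most $2d-3$ edges, extend it by the induction hypothesis using a palette avoiding both $c_1$ and $c_0$, and then restore the $c_1$-edges. In every case this produces a proper colouring of each plane that agrees with $\varphi$ and uses no $c_0$. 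I would then pick the pairing of the planes so that the two planes meeting across $e_1$ form a single pair, colour the two planes of each pair correspondingly, colour every edge of $D_1$ lying between two pairs with $c_0$, and colour each remaining edge of $D_1$ with the unique colour of its (size-$2$) list that differs from $c_0$; this is consistent around every cycle of $D_1$.

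The step I expect to be the genuine obstacle is the bookkeeping needed to keep $e_1$ compatible with all of this: it forces colour $c_1$ on one edge of $D_1$, and, unlike $c_0$, the colour $c_1$ really does occur elsewhere. What is needed is that, after extending $\varphi$ to $G-E(D_1)$, the colour $c_1$ is still missing at both endpoints of $e_1$; then the cycle of $D_1$ through $e_1$ completes with $e_1$ keeping colour $c_1$ (a single precolored edge of an even cycle always extends to a proper $2$-edge-colouring of that cycle). If the pair of planes across $e_1$ carries no precolored edge, I would colour it correspondingly from a $c_1$-free palette, so both endpoints of $e_1$ are automatically $c_1$-free. The remaining possibility is that this pair is the distinguished plane $Q_1$ of the one-plane case; here the reduction above — legitimate precisely because the case hypothesis guarantees a $c_1$-edge inside $Q_1$ to delete — makes $Q_1$'s palette $c_1$-free, so after restoring, $c_1$ occurs in $Q_1$ only at endpoints of $\varphi$-coloured $c_1$-edges; by properness of $\varphi$ no such edge meets the endpoint of $e_1$ in $Q_1$, and the endpoint of $e_1$ in the partner plane is the vertex corresponding to it, so $c_1$ is missing there as well. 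Threading this consistency through the two-plane case (which, as in Lemmas~\ref{cl:oneempty} and~\ref{cl:noempty2free}, is treated by splitting the planes into two corresponding blocks rather than a single pairing) and checking that the between-pair edges keep lists containing $c_0$ is the routine but fiddly remainder of the argument.
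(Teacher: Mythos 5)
Your setup is sound and matches the paper's: pick the dimension $D_1$ with exactly one precolored edge $e_1$, note that the hypothesis forces $\varphi(e_1)=c_1$ to reappear inside some plane of $G-E(D_1)$, and split by how many planes carry precolored edges. Your one-plane case also essentially works. But the engine you propose for finishing $D_1$ --- colour the two planes of each pair ``correspondingly'', give each within-pair layer edge the unique non-$c_0$ colour missing at its endpoints, and give every between-pair edge $c_0$ --- has a genuine gap. Corresponding colourings of a pair need not exist once \emph{both} planes of a pair contain precolored edges (corresponding edges may be precolored differently, or a precolored edge of one plane may correspond to an edge adjacent to a conflicting precolored edge of the other). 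Without correspondence, corresponding vertices $v,v'$ can miss different non-$c_0$ colours, so the list of the within-pair edge $vv'$ collapses to $\{c_0\}$, which is already reserved for the between-pair edge at $v$. This is unavoidable exactly where the lemma is hardest: when $e_1$ joins the two planes that contain all remaining precolored edges (the paper's Case~2, third subcase), the pair $(Q_1,Q_2)$ is forced, $c_1$ necessarily occurs in $Q_1\cup Q_2$, and your delete--restore step reintroduces $c_1$-edges whose endpoints miss non-matching colours; a similar failure occurs in the three-plane case whenever a pair contains two precolored planes. These are precisely the parts you defer as ``routine but fiddly'', and they are not.

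The missing idea is the paper's central device: instead of trying to keep $c_1$ missing at both endpoints of $e_1$, plant a pair of \emph{corresponding uncolored edges} $e'\in E(Q_i)$, $e''\in E(Q_{i+1})$ adjacent to $e_1$ and not adjacent to anything precolored $c_1$, precolor them $c_1$, extend every plane with the \emph{same} palette $\{1,\dots,2d\}\setminus\{c_2,c_3\}$ (two colours are globally free because $c_1$ is repeated), colour all of $D_1$ properly with $c_2,c_3$, and finally swap on the bicoloured $4$-cycle through $e_1,e',e''$ to drag $c_1$ onto $e_1$. This uniform two-free-colour scheme is what makes the layer cycles of $D_1$ $2$-colourable and removes any need for corresponding colourings within pairs; your single free colour $c_0$ and pairing scheme cannot reproduce it. To repair your argument in the problematic subcases you would either need to import this swap trick or replace the rigid $c_0$/missing-colour rule by a path-by-path list colouring of $D_1$, as the paper does when $e_1$ lies between the two precolored planes.
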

	\begin{proof}
		Since at most $2d-1$ edges are precolored, there is a dimension $D_1$
		with just one precolored edge $e$. Suppose $\varphi(e)=c_1$.
		Since at least one color appears on at least two edges, there are two
		colors $c_2, c_3 \in \{1,\dots, 2d\} \setminus \{c_1\}$ that 
		do not appear on any edge under $\varphi$.
		
		We consider some different cases.
		\bigskip

		{\bf Case 1.} {\em All precolored edges except $e$ lie in the same plane}:
		
		Let $Q_1$ be a plane containing all precolored edges except $e$. 
		By removing the color from every edge of $Q_1$ that is $\varphi$-colored $c_1$,
		we obtain a precoloring that by the induction hypothesis is extendable
		to a proper edge coloring of $Q_1$ using colors 
		$\{1,\dots, 2d\} \setminus \{c_1,c_2\}$.
		Denote this coloring by $f$.
		By recoloring the edges of $Q_1$ that are $\varphi$-colored $c_1$, we obtain, from $f$,
		an extension $f'$ of the restriction of $\varphi$ to $Q_1$.
		We color all other planes of $G-E(D_1)$ correspondingly.
		
		Suppose first that $e$ is incident with $Q_1$.
		Then, by Lemma \ref{lem:evencycle}, there is proper edge coloring $g$ of
		$D_1$ that has no conflicts with $f'$.
		Moreover, since just one edge of $D_1$ is precolored, we can choose this
		coloring so that $g(e) = c_1$. Hence, $\varphi$ is extendable.
		
		Next, we consider the case when $e$ is not incident with $Q_1$.
		Let $C$ be the cycle in $D_1$ containing $e$. If no vertex of $C$
		is incident with an edge colored $c_1$ under $f'$, then we may proceed
		as in the preceding paragraph. Otherwise, the endpoints of $e$ are incident with
		two edges $e_1$ and $e_2$ of $G-E(D_1)$ colored $c_1$.
		Note that $e, e_1, e_2$ are contained in a $4$-cycle in $G$, the fourth edge of which
		we denote by $e_3$.
		As before, we properly color the edges of $D_1$ so that the resulting coloring $g'$
		of $G$ is proper.
		Moreover, we choose $g'$
		so that $g'(e)= g'(e_3)= c_2$. By swapping colors on the bicolored $4$-cycle
		with edges $e, e_1,e_2,e_3$ we finally obtain an extension of $\varphi$.
		
		\bigskip
				
		{\bf Case 2.} {\em All precolored edges except $e$ lie in exactly two planes}:
		
		Let $Q_1$ and $Q_i$ be the two planes containing the precolored edges
		distinct from $e$. 
		
		Suppose first that $e$ is not adjacent to $Q_1$ or $Q_i$. 
		Since each of $Q_1$ and $Q_i$ contains at most $2d-3$ precolored edges,
		there are extensions
		$f_1$ and $f_i$ of the restrictions of $\varphi$ to $Q_1$ and $Q_i$, respectively,
		using colors $\{1,\dots, 2d\} \setminus \{c_2,c_3\}$.
		Next, we properly color 
		each plane $Q$ of $G-E(D_1)$ that is distinct from $Q_1$ and $Q_i$,
		by colors $\{1,\dots, 2d\} \setminus \{c_2,c_3\}$, so that all these planes
		are colored correspondingly.
		Moreover, color all edges of $D_1$ alternately using colors $c_2$ and $c_3$
		and starting with color $c_2$ for all edges of $D_1$ with endpoints in $Q_1$
		and $Q_2$. Then $e$ is contained in 
		a bicolored $4$-cycle,
		and by swapping colors on this cycle,
		we obtain an extension of $\varphi$.
		
		\bigskip
		
		Let us now consider the case when $e$ is adjacent to exactly one of
		$Q_1$ and $Q_i$. Suppose
		e.g. that $e$ is incident with $Q_1$ and $Q_2$, so $i \neq 2$.
		Since each of $Q_1$ and $Q_i$ contains at most $2d-3$ precolored edges,
		there are extensions
	$f_1$ and $f_i$ of the restrictions of $\varphi$ to $Q_1$ and $Q_i$, respectively,
	using colors $\{1,\dots, 2d\} \setminus \{c_2,c_3\}$.
	Next, we color all uncolored planes in $G-E(D_1)$ 
	correspondingly to how $Q_1$ is colored, color all cycles of
	$D_1$ properly using colors $c_2$ and $c_3$, and starting with color $c_2$
	for all edges with endpoints in $Q_1$ and $Q_2$. Since $Q_1$
	and $Q_2$ are colored correspondingly, there is a bicolored $4$-cycle with
	colors $c_1, c_2$ containing $e$. By swapping colors on this $4$-cycle,
	we obtain an extension of $\varphi$.

		\bigskip
		
	Suppose now that $e_1$ is incident with both planes containing precolored edges,
	say $Q_1$ and $Q_2$. By removing the color from any edge that is colored $c_1$
	under $\varphi$, we obtain a precoloring $\varphi'$ of $G$ such that the restrictions
	of $\varphi'$ to $Q_1$ and $Q_2$, respectively, are 
	extendable to proper edge colorings
	using colors $\{1,\dots, 2d\} \setminus \{c_1,c_3\}$. By recoloring any
	edge of $Q_1$ and $Q_2$ that is $\varphi$-colored $c_1$, 
	we obtain proper edge colorings
	$f_1$ and $f_2$ of $Q_1$ and $Q_2$, respectively, using colors 
	$\{1,\dots, 2d\} \setminus \{c_3\}$. We color the edges of $D_1$ between
	$Q_1$ and $Q_2$ by $c_3$, except for $e$ which is colored $c_1$.

		Next,
		we color $Q_{3} , \dots, Q_{2k-1}$ correspondingly to 
		how $Q_2$ is colored under $f_2$ 
		and $Q_{2k}$ correspondingly to how $Q_1$ is colored. Now, since $Q_2$ and
		$Q_3$ are colored correspondingly, for each edge $e'$ of $D_1$ between
		$Q_2$ and $Q_3$, there is a color $c' \in \{1,\dots, 2d\}$ 
		that does not appear on an adjacent edge
		in $Q_2$ or on the edge between $Q_1$ and $Q_2$. We color every such edge $e'$
		between $Q_2$ and $Q_3$
		by such a color $c'$, and thereafter color all edges of every path
		of $D_1$ from $Q_1$ to $Q_{2k}$ alternately by the colors used 
		on the edges between $Q_1$
		and $Q_2$, and $Q_2$ and $Q_3$ respectively.
		This yields a proper partial edge coloring where the endpoints of every edge 
		of $D_1$ between $Q_1$ and $Q_{2k}$ are incident to edges with $2d-1$ colors
		from $\{1,\dots, 2d\}$. Thus we can properly color every such edge so that 
		we get an extension of $\varphi$ using colors $1,\dots, 2d$.

		\bigskip

		{\bf Case 3.} {\em At least three planes in $G-E(D_1)$ contain
		precolored edges}:
		
		The assumption implies that every plane in $G-E(D_1)$ contains at most
		$2d-4$ precolored edges, and any two adjacent planes contain altogether at
		most  $2d-3$ precolored edges.
		Without loss of generality, we assume that $e$ is incident with $Q_1$
		and $Q_2$. Since every vertex of $Q_1$ and $Q_2$ has degree $2d-2$,
		and $Q_1 \cup Q_2$ contains at most $2d-3$ precolored edges,
		there are uncolored corresponding edges $e_1 \in E(Q_1)$ and $e_2 \in E(Q_2)$
		that are adjacent to $e$ but not to any edge in $Q_1 \cup Q_2$
		precolored $c_1$. From the restriction of $\varphi$ to $G-E(D_1)$,
		we define a new precoloring
		$\varphi'$ of $G-E(D_1)$ by in addition coloring $e_1$ and $e_2$ by the color 
		$c_1$.
		
		Now, since each component of $G-E(D_1)$ contains at most $2d-3$ 
		$\varphi'$-precolored edges, by the induction hypothesis, 
		there is an extension of $\varphi'$ to $G-E(D_1)$ using colors
		$\{1,\dots, 2d\} \setminus \{c_2, c_3\}$. Next, we properly color
		the edges of $D_1$ using colors $c_2$ and $c_3$, and starting with color
		$c_2$ for the edges with endpoints in $Q_1$ and $Q_2$.
		Now, since $e_1$ and $e_2$ are both colored $c_1$, there is a bicolored
		$4$-cycle with edges $e_1,e_2$ and $e$ with colors $c_1$ and $c_2$. By
		swapping colors on this $4$-cycle, we obtain an extension of $\varphi$.	
	\end{proof}


\section{Extending a precoloring of $2d$ edges in $C^d_{2k+1}$}

In this section, we prove the following theorem for the iterated cartesian 
product of odd cycles of length at least $5$.

\begin{theorem}
\label{th:oddcycle}
	If $G=C^d_{2k+1}$ is the $d$th power of the cartesian product of the odd cycle
	$C_{2k+1}$ with itself ($k\geq 2$), and $\varphi$ is a proper partial edge coloring of 
	$G$ with at most $2d$ precolored edges, then $\varphi$ can be extended
	to a proper $(2d+1)$-edge coloring of $G$.
\end{theorem}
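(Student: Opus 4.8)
The plan is to follow the inductive scheme used for Theorem~\ref{th:evencycle}, with two systematic changes: every appeal to the $2$-edge-choosability of even cycles (Lemma~\ref{lem:evencycle}) is replaced by an appeal to Lemma~\ref{lem:oddcycle} together with the list-coloring proposition for paths, and we exploit that $C^d_{2k+1}$ is Class~$2$ with $\chi'(C^d_{2k+1})=2d+1$ (by a parity argument), so that with $2d+1$ colors every vertex has a missing color and, since $\varphi$ colors at most $2d$ edges, at least one color is used by $\varphi$ on no edge at all.

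We induct on $d$; the case $d=1$ is immediate, as every partial edge coloring of an odd cycle is extendable. For the inductive step, fix a dimension $D_1$ and view $G-E(D_1)$ as $2k+1$ cyclically arranged planes $Q_1,\dots,Q_{2k+1}$, each isomorphic to $C^{d-1}_{2k+1}$; the induction hypothesis says that every precoloring of at most $2d-2$ edges of a plane extends to a proper $(2d-1)$-edge coloring of that plane. Exactly as in the even case we split into three situations, each handled by a separate lemma: (a) some dimension carries no precolored edge; (b) every dimension carries a precolored edge and some dimension carries at most two, all of whose colors occur on no edge of any other dimension; (c) the complementary case. By pigeonhole there is always a dimension carrying at most two precolored edges; in case (c) the fact that such a dimension's color is repeated in another dimension forces $\varphi$ to use at most $2d-1$ colors, leaving two colors globally free, whereas in case (a) the two-plane subcase may leave only one color free. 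We take $D_1$ to be, respectively, an empty dimension, a dimension with at most two precolored edges whose colors are absent from all other dimensions, or (as in the even case) a dimension carrying as few precolored edges as possible --- necessarily at most two.

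The uniform mechanism behind the three lemmas is the following. First, extend the precoloring of each plane $Q_j$ to a proper edge coloring of $Q_j$ using colors from a suitable $(2d-1)$-subset of $\{1,\dots,2d+1\}$; when a plane carries more than $2d-2$ precolored edges --- which can only happen when it carries all but at most one of the precolored edges of $G$ --- we first delete one or two whole color classes from its precoloring, apply the induction hypothesis, and then restore the deleted colors, exactly as in the proof of Lemma~\ref{cl:oneempty}. Secondly, choose these plane colorings to be \emph{corresponding} on long blocks of consecutive planes; since each vertex of a plane meets only $2d-2$ plane edges, every layer of $D_1$ then inherits a list assignment in which all but a bounded number of edges have lists of size at least~$3$, while the remaining (``seam'') edges still have nonempty lists, always containing a color reserved for $D_1$, and no layer has all its lists equal unless they all have size at least~$3$. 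Thirdly, $L$-color each (odd) layer of $D_1$: if the layer carries two distinct edge-lists this is possible by Lemma~\ref{lem:oddcycle}, and if all its lists coincide (necessarily of size at least~$3$) or it already carries a precolored edge of $D_1$, we delete one edge of the layer and finish by the path proposition. A precolored edge of $D_1$ whose color already appears in $G-E(D_1)$ is accommodated first, as in Lemma~\ref{cl:noemptynofree}, by placing it on a bicolored $4$-cycle and performing a swap.

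The step I expect to be the real obstacle is the seam bookkeeping, which is genuinely different from the even case. There the $2k$ planes were grouped into $k$ adjacent pairs colored correspondingly within each pair, with layer edges inside a pair receiving a free common color and layer edges between pairs receiving the reserved color; around an \emph{even} layer cycle the ``inside'' and ``between'' edges alternate perfectly, so the result is automatically proper. With $2k+1$ planes no such pairing exists, so one must instead organize the planes into at most two correspondingly-colored blocks --- forcing only two seam edges onto each layer --- arrange these two seam edges to be \emph{non-adjacent} on the (odd) layer cycle, which is exactly where the hypothesis $k\ge 2$ (guaranteeing at least five planes) is used, color both seam edges with the reserved color, and then greedily color the two remaining paths of the layer cycle from their lists, the list size dropping from~$3$ to~$2$ only at the two ends of each path, which the path proposition still handles. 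Verifying that this set-up can always be achieved --- in particular when the precolored edges lie in two \emph{adjacent} planes and $\varphi$ uses all $2d$ colors, so that only a single color is globally free --- and that it stays compatible with the at most two precolored edges of $D_1$, is where essentially all the work beyond the even case is concentrated.
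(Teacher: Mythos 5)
Your overall strategy coincides with the paper's: induction on $d$, decomposition into a dimension $D_1$ and $2k+1$ planes, a case split according to how the precolored edges distribute over dimensions, plane colorings made to correspond on blocks so that the layers of $D_1$ can be list-colored via Lemma~\ref{lem:oddcycle} and the path proposition, and bicolored $4$-cycle swaps to accommodate precolored edges of $D_1$. You also correctly identify that the parity obstruction on odd layers is what makes the odd case harder than the even one. However, there is a concrete gap in your case analysis. Your three cases mirror the even-cycle proof, where pigeonhole on at most $2d-1$ precolored edges always yields a dimension with exactly \emph{one} precolored edge in the residual case. With at most $2d$ precolored edges this fails: every dimension may contain exactly two precolored edges, each with a color recurring elsewhere. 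The paper therefore needs a fourth case (Lemma~\ref{cl:ODD2edges}), and it is by far the longest and most delicate part of the argument; your ``uniform mechanism'' does not cover it.

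The difficulty is not the seam bookkeeping you flag (which the paper handles much as you describe, by cutting each odd layer into paths between consecutive precolored planes) but the following configuration: $D_1$ contains two precolored edges $e_1,e_2$ with distinct colors $c_1\neq c_2$, both incident with the same pair of planes $Q_1,Q_2$, and $Q_1\cup Q_2$ contains all remaining $2d-2$ precolored edges. Your plan is to ``accommodate first'' a precolored edge of $D_1$ whose color recurs by placing it on a bicolored $4$-cycle, but to do that for both $e_1$ and $e_2$ you must find, inside the already crowded $Q_1\cup Q_2$, uncolored edges adjacent to $e_1$ and $e_2$ that can be pre-assigned $c_1$ and $c_2$ without creating conflicts, and such that the resulting augmented precoloring of each plane still has at most $2d-2$ colored edges so the induction hypothesis applies. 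The paper needs a separate counting argument (Claim~\ref{cl:edge}, using $(2d-2)$-regularity and triangle-freeness) to show that at least \emph{one} of the two required edges exists, and a further workaround (permuting colors between correspondingly colored planes, or pre-coloring the whole neighborhood of $e_2$ to block the color $c_1$ there) when only one does. None of this is present in your sketch, so as written the proposal does not yet constitute a proof of the theorem.
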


As for the case of even cycles, (for $d \geq 2$) it is easily seen that the number of
precolored edges here is best possible, because $\chi'(G)=2d+1$ .



\begin{proof}[Proof of Theorem \ref{th:oddcycle}]
	The proof of this theorem is similar to the proof of 
	Theorem \ref{th:evencycle}, so 
	we shall omit or just sketch some parts which are similar to techniques in that proof.
	Particularly in the last parts of the proof, to avoid tedious repetition we omit parts which are
	very similar to techniques that have been described in more detail earlier in the proof.

We proceed by induction on $d$,  the case $d=1$ being trivial.
		As in the proof of Theorem \ref{th:evencycle},
	we shall prove a series of lemmas that together will imply the theorem.
	Since odd cycles are not $2$-edge-colorable, the proof is longer and more
	difficult than the proof of that theorem.
	In the proofs of these lemmas 
	we shall consider a specified dimension $D_1$, and
	the subgraph $G-E(D_1)$ consisting of $2k+1$ planes
		$Q_1, \dots, Q_{2k+1}$, where $Q_i$ is adjacent to $Q_{i+1}$
		(here, and in the following,
		indices are taken modulo $2k+1$).

	We shall assume that every edge precoloring of a plane of $G-E(D_1)$
	with at most $2d-2$ precolored
	edges is extendable to a proper edge coloring using $2d-1$ colors, and prove that
	a given precoloring $\varphi$ of $G$ with at most $2d$ precolored edges is extendable to
	a proper $(2d+1)$-edge coloring of $G$.
	To that end, we shall distinguish between the following different cases.

	\begin{itemize}
		
		\item There is a dimension of $G$ that contains no precolored edges.
		
		\item Every dimension of $G$ contains precolored edges, and there is a
		dimension with at most two precolored edges, the colors of which
		do not appear on edges in any other dimension of $G$.
		
		\item Every dimension of $G$ contains edges with colors that also appear
		on edges in other dimensions, or at least three precolored edges,
		and one dimension contains only one precolored edge.
		
		\item Every dimension of $G$ contains two precolored edges,
		at least one of which has a color appearing on edges in another 
		dimension.
		
	\end{itemize}

		\end{proof}

	\begin{lemma}
	\label{cl:ODDoneempty}
		If there is a dimension of $G$ that contains no precolored edges, then $\varphi$
		is extendable.
	\end{lemma}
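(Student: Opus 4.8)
The plan is to mimic the three-case structure of the proof of Lemma~\ref{cl:oneempty}. Let $D_1$ be the precolor-free dimension, and let $Q_1,\dots,Q_{2k+1}$ be the planes of $G-E(D_1)$. I would distinguish: (i) all precolored edges lie in one plane; (ii) they lie in exactly two planes; (iii) at least three planes carry precolored edges. Throughout I would use the \emph{slack} that each plane $Q_i\cong C^{d-1}_{2k+1}$ is $(2d-2)$-regular while $2d+1$ colors are available, so that in any proper edge coloring of a plane at least three colors are missing at every vertex. In every case the edges of $D_1$ are colored layer by layer, each layer being an odd cycle $C_{2k+1}$, and the coloring of $D_1$ has no conflicts with the rest by construction; the only real difficulty is that such a $C_{2k+1}$ needs three colors, whereas in the even-cycle proof the $D_1$-layers are even and can be $2$-colored after pairing up the planes.

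In case (i): since $Q_1$ has at most $2d$ precolored edges, delete two colors that occur on $Q_1$ (all of them if only one occurs), which leaves at most $2d-2=2(d-1)$ precolored edges there; extend this by the induction hypothesis to a proper $(2d-1)$-edge coloring of $Q_1$ on the remaining $2d-1$ colors, restore the two deleted colors, copy the coloring of $Q_1$ onto every other plane, and color each $D_1$-layer — all of whose edges now have the same list of three admissible colors, and hence is $L$-colorable, e.g.\ via Lemma~\ref{lem:oddcycle}. In case (ii), let $Q_1$ be the plane with the most precolored edges; pick a color $c_1$ missing from $\varphi$ and a color $c_2$ occurring on $Q_1$. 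Deleting $c_2$, extending the precolorings of the two special planes by the induction hypothesis to colorings $f_1,f_i$ that omit $\{c_1,c_2\}$, and then restoring $c_2$, yields colorings of $Q_1,Q_i$ that avoid $c_1$. Color each remaining plane with the pattern of $f_1$ or of $f_i$, chosen so that around the cyclic sequence $Q_1,\dots,Q_{2k+1}$ there are exactly two ``color changes'' and they sit at two non-adjacent positions; this is possible because $2k+1\ge5$ (this uses $k\ge2$, in particular when $Q_i$ is adjacent to $Q_1$). In each $D_1$-layer color the two change-edges with $c_1$ (admissible, since $c_1$ is absent from $G-E(D_1)$) and list-color the two remaining paths, whose edges have lists of size three, size at least two at the ends where $c_1$ is forbidden.

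Case (iii) is the heart of the matter. Here each plane has at most $2d-2$ precolored edges, so the induction hypothesis applies plane by plane. Pick two consecutive planes, relabeled $Q_{2k+1},Q_1$, with $Q_1$ special, and with $Q_{2k+1}$ non-special whenever a non-special plane exists. A short counting argument — using only that at least three planes are special, and invoking $k\ge2$ in the extreme case where \emph{all} $2k+1$ planes are special and hence each carries very few precolored edges — produces three distinct colors $c_1,c_2,c_3$ such that $c_1$ occurs on no edge of $Q_1\cup\cdots\cup Q_{2k}$, $c_2$ on no edge of $Q_2\cup\cdots\cup Q_{2k+1}$, and $c_3$ on no edge of $Q_1\cup Q_{2k+1}$. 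By the induction hypothesis, color $Q_1$ avoiding $\{c_1,c_3\}$, each $Q_j$ with $2\le j\le2k$ avoiding $\{c_1,c_2\}$, and $Q_{2k+1}$ avoiding $\{c_2,c_3\}$ (the precoloring of each plane uses colors compatible with its prescribed palette, by the choice of $c_1,c_2,c_3$). Then in every $D_1$-layer $C_{2k+1}$ the edges can be colored, in cyclic order, $c_1,c_2,c_1,c_2,\dots,c_1,c_2,c_3$: each color is missing at both endpoints of the edge it is assigned to, by the avoidance conditions, and the pattern is a proper edge coloring of $C_{2k+1}$. Thus the single $c_3$-edge carries the ``odd defect'' of the cycle, routed through the chosen consecutive pair of planes — this is exactly what replaces the plane-pairing of the even-cycle argument. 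The step I expect to be the most delicate is the bookkeeping needed to choose $c_1,c_2,c_3$ consistently for all planes at once, which I would likely handle by a further subdivision according to how many colors $\varphi$ uses.
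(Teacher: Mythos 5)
Your proposal is correct and shares the paper's skeleton exactly — induction on $d$, the same three-way split by how many planes of $G-E(D_1)$ carry precolored edges, and an identical Case~(i) — but your Cases~(ii) and~(iii) are executed differently from the paper's. In Case~(ii) the paper works through three subcases (a dominant plane with $2d-1$ precolored edges, at most $2d-1$ colors used, exactly $2d$ colors used), using correspondence-copying plus a color permutation to reconcile the two special planes; your single ``delete/restore $c_2$, then arrange the two pattern-change positions non-adjacently and color both change-edges with the globally unused color $c_1$'' argument subsumes all three subcases at once, at the cost of needing $2k+1\ge 5$ to separate the change positions (which is exactly the standing hypothesis $k\ge 2$). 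In Case~(iii) the paper colors each $D_1$-layer path between consecutive special planes with its own pair of colors avoiding those two planes and then patches the last path; you instead fix one global triple $c_1,c_2,c_3$ and use the uniform pattern $c_1,c_2,\dots,c_1,c_2,c_3$ on every layer, concentrating all the bookkeeping into a single selection of three distinct colors. That selection does go through: with $\bar S_1,\bar S_2,\bar S_3$ the complements of the color sets on $Q_1\cup\dots\cup Q_{2k}$, $Q_2\cup\dots\cup Q_{2k+1}$ and $Q_1\cup Q_{2k+1}$ respectively, one checks the Hall condition using $|\bar S_1|\ge 1+a_{2k+1}$, $|\bar S_2|\ge 1+a_1$, $|\bar S_3|\ge 2d+1-a_1-a_{2k+1}$, the fact that $Q_1$ is special (so $a_1\ge1$, ruling out $\bar S_1=\bar S_2$ being a single color), and that a third special plane forces $a_1+a_{2k+1}\le 2d-1$. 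One small correction: the driver of the extreme-case count is $d\ge 2$ (so that $|\bar S_1|+|\bar S_2|+|\bar S_3|\ge 2d+3>6$), not $k\ge 2$ as you suggest; the number of planes plays no role there. Your version buys a shorter, more uniform argument; the paper's path-by-path coloring needs only local color choices (two colors per pair of consecutive special planes) and so avoids the global system-of-distinct-representatives step.
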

	\begin{proof}
		Suppose that $D_1$ is a dimension in $G$ that contains no precolored edges. 
		We consider some different cases.
		
		\bigskip
		
		{\bf Case 1.} {\em All precolored edges are contained in one plane:}
		
		Suppose that all precolored edges are contained in one plane, say $Q_1$.
		Let $c_1$ and $c_2$ be two colors used by $\varphi$ (if just one color appears
		under $\varphi$, then $c_2$ is any color from 
		$\{1,\dots,2d+1\} \setminus \{c_1\}$).
		By removing the colors $c_1$ and $c_2$ from any edge colored by these colors,
		we obtain an edge precoloring $\varphi'$ of $Q_1$ that is extendable
		to a $(2d-1)$-coloring of $Q_1$ using colors
		$\{1,\dots, 2d+1\} \setminus \{c_1,c_2\}$. 
		Next we recolor the edge precolored $c_1$
		and $c_2$, respectively, using these colors, and thereafter
		color all other planes correspondingly.
		Since all planes are colored correspondingly, we can 
		apply Lemma \ref{lem:oddcycle} to
		properly color the edges of each layer of $D_1$ to obtain
		an extension of $\varphi$.
		
		\bigskip
		
		{\bf Case 2.} {\em All precolored edges are contained in two planes:}
		
		Suppose that $Q_1$ and $Q_i$
		contain all precolored edges. We shall consider three different cases.
		
		Suppose first that $2d-1$ precolored edges are contained in 
		the same plane, say $Q_1$,
		and that one edge $e_i$ in $Q_i$ is colored $c_1$. Let $c_2$ be 
		a color appearing on some edge
		in $Q_1$. 
		From the restriction of $\varphi$ to $Q_1$ we define a 
		precoloring $\varphi'$ of $Q_1$
		by removing color $c_2$ from every edge $\varphi$-colored $c_2$.
		Then $\varphi'$ is extendable to a proper $(2d-1)$-edge coloring using
		$2d-1$ colors from $\{1,\dots 2d+1\} \setminus \{c_2\}$. By recoloring every edge
		of $Q_1$ that is $\varphi$-colored $c_2$ by the color $c_2$, 
		we obtain a proper edge coloring
		$f$ of $Q_1$.
		
		Let $e_1$ be the edge of $Q_1$ corresponding to $e_i$ of $Q_i$. If $f(e_1) = c_1$,
		then we color all planes in $G-E(D_1)$ correpondingly to how $Q_1$ is colored. By
		Lemma \ref{lem:oddcycle}, we may then color the edges of $D_1$ 
		to obtain an extension
		of $\varphi$. If, on the other hand, $f(e_1) = c_3 \neq c_1$, then we define
		a proper edge coloring of $Q_i$ by coloring it correspondingly to $Q_1$
		but permuting the colors in $\{1,\dots, 2d+1\}$
		so that $c_1$ is mapped to $c_3$ and vice versa, and all other
		colors are mapped to themselves.
		This yields a proper edge coloring of $Q_i$ that agrees
		with the restriction of $\varphi$ to $Q_i$. 
		We color all other $Q_j$'s correspondingly to
		how $Q_i$ is colored, and
		applying Lemma \ref{lem:oddcycle}, we obtain an extension of $\varphi$, as before.
		
	Let us now assume that both $Q_1$ and $Q_i$ contain at most $2d-2$ 
		precolored edges,
		respectively,
		and at most $2d-1$ colors, say $1,\dots, 2d-1$, are used by $\varphi$. 
		By the induction hypothesis, the restrictions of 
		$\varphi$ to $Q_1$ to $Q_i$ are extendable
		to $(2d-1)$-edge colorings $f_1$ and $f_i$, respectively, 
		using colors $1,\dots, 2d-1$.
		We color all other $Q_j$'s correspondingly to
		how $Q_i$ is colored, and
		using Lemma \ref{lem:oddcycle} we obtain an extension of $\varphi$.
		
		On the other hand, if both $Q_1$ and $Q_i$ contain at most $2d-2$
		precolored edges, respectively, but in total $2d$ colors $1,\dots, 2d$
		are used by $\varphi$, then every color appears on exactly
		one edge under $\varphi$. Hence, we may assume that 
		one edge of $Q_1$, but not $Q_i$, 
		is colored, say, $1$, and similarly, one edge of $Q_i$ is colored $2d$.
		By the induction hypothesis, there is an extension $f_1$
		of the restriction of $\varphi$
		to $Q_1$ using colors $1,\dots, 2d-1$, and an extension $f_i$ of 
		the restriction of
		$\varphi$ to $Q_i$ using colors $2,\dots, 2d$.
		
		Now, either $i \neq 2$ or $i \neq 2k+1$; suppose that the former holds.
		We define a proper edge coloring $f_2$ of $Q_2$ 
		using colors $2,\dots, 2d$ by coloring
		$Q_2$ correspondingly to $Q_1$ but using color $2d$ instead of $1$, 
		and then coloring
		all other planes of $G-E(D_1)$ correspondingly to how $Q_i$ is colored.
		By the construction of $f_2$, for each layer edge $e$ of $D_1$ between
		$Q_1$ and $Q_2$,
		there is a color in $\{2,\dots, 2d\}$ that does not appear at an endpoint of $e$.
		We color every such layer edge by this color, and then color the edges of every
		cycle in $D_1$ by colors $1$ and $2d+1$ alternately, and starting with color $1$
		at $Q_2$. This yields an extension of $\varphi$.

		\bigskip
		
	{\bf Case 3.} {\em All precolored edges are contained in at least three planes:}

	Let $Q_{j_1}, Q_{j_2}, \dots, Q_{j_s}$ be the planes of $G-E(D_1)$ that contain
	precolored edges, where $j_1 \leq j_2 \leq \dots \leq j_s \leq 2k+1$. Note that
	any two planes contain precolored edges of altogether at most $2d-1$ colors,
	and that there are two planes $Q_{j_i}$ and $Q_{j_{i+1}}$ that contain precolored
	edges of altogether at most $2d-2$ colors. We assume that 
	$Q_{j_1}$ and $Q_{j_s}$ are two such planes.
		
	Consider an arbitrary cycle $C$ in $D_1$. We partition the edges of $C$
	into paths $P_{12},\dots, P_{(s-1)s}, P_{s1}$ where $P_{r(r+1)}$ has its endpoints
	in $Q_{j_r}$ and $Q_{j_{r+1}}$. Now, for each path $P_{r(r+1)}$, 
	there are two colors
	$c_{r(r+1)}, c'_{r(r+1)} \in \{1,\dots, 2d+1\}$  
	so that none of these colors appear 
	in the restriction of $\varphi$
	to $Q_{j_r} \cup Q_{j_{r+1}}$. For $r=1,\dots s-1$,
	we color each path $P_{r(r+1)}$ alternately by colors
	$c_{r(r+1)}$ and $c'_{r(r+1)}$,
	so that the resulting edge coloring is proper.
	Now, by assumption we have that $Q_{j_1}$ and $Q_{j_s}$ contain  edges
	of altogether at most $2d-2$ colors.
	Hence, there are two colors $c$ and $c'$ that do not appear on edges in $Q_{j_1}$ 
	or $Q_{j_s}$, nor on an edge of $D_1$ that is incident with $Q_{j_1}$.
	We color the edges in the path of $C$ from $Q_{j_s}$ to $Q_{j_1}$ by colors
	$c$ and $c'$ so that the resulting coloring is proper.
		
	Next, we color all uncolored edges of $D_1$ correspondingly to how $C$ is colored.
	Now, each $Q_j$ is incident with
	edges of $D_1$ of two colors that do not appear on 
	edges of $Q_j$ under $\varphi$, and,
	moreover, each $Q_j$ contains at most $2d-2$ precolored edges.
	Hence, by the induction
	hypothesis, the restriction of $\varphi$ to each $Q_j$ can be extended to a proper
		edge coloring using colors that do not appear on edges of $D_1$ that are 
		incident with $Q_j$. In conclusion, $\varphi$ is extendable.
	\end{proof}
	
	
	\begin{lemma}
	\label{cl:ODDnoempty2free}
		If there is a dimension of $G$ with at most two precolored edges, 
		the colors of which do not appear
		on edges in any other dimension of $G$,
		then $\varphi$ is extendable.
	\end{lemma}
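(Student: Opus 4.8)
The plan is to imitate the even-cycle Lemma~\ref{cl:noempty2free}, the crucial difference being that the layers of the distinguished dimension $D_1$ are now odd cycles, so they cannot be colored using only the two reserved colors; a third color must be used on every layer of $D_1$, and the whole argument revolves around doing this so that each plane still has $2d-1$ colors left with which to extend its own precoloring.

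First I would record some bookkeeping. If $D_1$ contains no precolored edge we are done by Lemma~\ref{cl:ODDoneempty}, so assume $D_1$ has one or two precolored edges, all of whose colors---call them $c_1$ and, if present, $c_2$---occur on no edge of $G-E(D_1)$. Since at most $2d$ of the $2d+1$ colors are used, a further color is globally unused; a short count (as in Lemma~\ref{cl:noempty2free}) then shows that, outside some exceptional configurations treated separately, every plane $Q_1,\dots,Q_{2k+1}$ of $G-E(D_1)$ carries at most $2d-2$ precolored edges, so the induction hypothesis applies to each plane, and there are three ``safe'' colors $c_1,c_2,c_3$---each either globally unused or occurring only inside $D_1$---available for coloring the layers of $D_1$.

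The first exceptional configuration is: $D_1$ has a single precolored edge $e$ with $\varphi(e)=c_1$, and one plane $Q_1$ contains all the remaining precolored edges (so $Q_1$ may carry up to $2d-1$ of them). As in Lemma~\ref{cl:ODDoneempty}, I would delete one color class of $\varphi|_{Q_1}$, extend the resulting $(\le 2d-2)$-precoloring of $Q_1$ by the induction hypothesis to a proper coloring using only colors from $\{1,\dots,2d+1\}\setminus\{c_1\}$, then restore the deleted class; coloring all other planes correspondingly, the $2k+1$ edges of each layer of $D_1$ all receive a common list of exactly three colors, one of which is $c_1$. Since $k\ge 2$ each such odd layer is list-colorable, and the layer through $e$ can in addition be colored with $c_1$ on $e$; this yields the extension. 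The other exceptional configuration, where the remaining precolored edges occupy exactly two planes, is handled by a mild variant of the same idea together with a swap along a bicolored path, as in Lemma~\ref{cl:noempty2free}.

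For the remaining ``general'' configuration I would use the path-decomposition device of Lemma~\ref{cl:ODDoneempty}, Case~3. Let the \emph{active} planes be those carrying precolored edges together with the (one or two) planes incident to precolored edges of $D_1$; decompose each layer cycle of $D_1$ into the sub-paths running between consecutive active planes; on each sub-path alternate a pair of colors missing from the precolorings of the two active planes it joins, on the sub-path(s) carrying a precolored edge of $D_1$ use a pair containing the required color $c_1$ (respectively $c_2$) so that edge keeps its color, and close each cycle using a pair of colors missing from the two ``extreme'' active planes---such a pair exists because some two active planes together use at most $2d-2$ colors. Coloring every layer of $D_1$ in this way, one arranges that each plane $Q_j$ is incident to $D_1$-edges of only \emph{two} colors, both avoiding $\varphi|_{Q_j}$; hence by the induction hypothesis $\varphi|_{Q_j}$ extends using the remaining $2d-1$ colors, and the extensions fit together. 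The main obstacle---and the reason this lemma is considerably longer than its even-cycle analogue---is precisely this constraint that no plane be incident to more than two colors of $D_1$: it is what forces the segment structure, and it becomes genuinely delicate when the two precolored edges of $D_1$ lie on different layers, so that the layers of $D_1$ cannot all be colored identically and one must still arrange the alternations so that every plane sees at most two $D_1$-colors.
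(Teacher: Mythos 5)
Your overall strategy coincides with the paper's: reserve the (at most two) colors of the precolored edges of $D_1$ together with a globally unused color, extend each plane by the induction hypothesis from a palette of $2d-1$ colors, and color the odd layers of $D_1$ in segments between ``active'' planes so that each plane meets $D_1$-edges of at most two colors; your treatment of the configuration with a single precolored edge in $D_1$ and all remaining precolored edges in one plane is correct and is essentially the paper's Case~1.1. The gap is that the proposal stops exactly at the case it identifies as the crux. When $D_1$ carries two precolored edges $e_1,e_2$ with $\varphi(e_1)=c_1\neq c_2=\varphi(e_2)$ that are incident with the same pair of planes $Q_1,Q_2$ (necessarily in different layers), the goal you state --- ``arrange the alternations so that every plane sees at most two $D_1$-colors'' --- is provably unattainable, not merely delicate: $Q_1$ and $Q_2$ already meet both $c_1$ and $c_2$, so every $D_1$-edge incident with them would have to be colored from $\{c_1,c_2\}$; in $e_1$'s layer the edge between $Q_2$ and $Q_3$ is then forced to $c_2$ while in $e_2$'s layer it is forced to $c_1$, so $Q_3$ also sees both colors, and this propagates around the cycle of planes, forcing every odd layer to be $2$-colored, which is impossible. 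And a plane cannot absorb a third forbidden color, since it is a copy of $C^{d-1}_{2k+1}$, a $(2d-2)$-regular graph with an odd number of vertices and hence Class~2; nor does the induction hypothesis supply anything stronger than uniform-palette extensions.

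The paper resolves this with a device absent from your plan: if $Q_1\cup Q_2$ contains at most $2d-4$ precolored edges, it transfers the color $c_2$ onto a pair of corresponding uncolored edges of $Q_1$ and $Q_2$ adjacent to $e_2$ but not to $e_1$, erases the color of $e_2$, runs the one-precolored-edge argument, and recovers $\varphi(e_2)=c_2$ at the end by swapping on a bicolored $4$-cycle; if $Q_1\cup Q_2$ contains at least $2d-3$ precolored edges, it uses that at most one edge is precolored elsewhere, extends $Q_1\cup Q_2$ avoiding $\{c_1,c_2\}$, and breaks every layer at $Q_2Q_3$ so that the remaining even path of each layer can be $(c_1,c_2)$-alternated with a layer-dependent phase. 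Analogous existence arguments for such auxiliary edges, and the fallback constructions when they fail to exist (for instance the subcase of Case~1.3 where $Q_{2k+1}\cup Q_1$ carries $2d-3$ precolored edges), constitute most of the paper's proof and are not addressed by the sketch. As written, the proposal is a sound road map for the easy configurations but leaves a genuine gap at its self-identified hardest point.
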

		\begin{proof}
		Let $D_1$ be a dimension containing at most two precolored edges, the colors
		of which do not appear on any edges in $G-E(D_1)$. 
		
		We first consider the case when only one color $c_1$ appears on the precolored edges of $D_1$.
		
		\bigskip
		
		{\bf Case 1.} {\em Only one color $c_1$ appears on the precolored edge(s) of $D_1$:}
		
		In this case the argument breaks into several subcases.
		
		\bigskip
		
		{\bf Case 1.1.} {\em All precolored edges of $G-E(D_1)$ are contained in one plane:}
		
		Let $Q_1$ be a plane in $G-E(D_1)$ 
		containing all precolored edges except the ones of $D_1$. 
		As before, there is an extension of the restriction of $\varphi$ to $Q_1$
		using colors $\{1,\dots, 2d+1\} \setminus \{c_1\}$ (by removing the colors
		of edges colored by some color $c_2\neq c_1$, taking an extension of the 
		resulting precoloring of $Q_1$ using colors 
		$\{1,\dots, 2d+1\} \setminus \{c_1,c_2\}$ 
		and then recoloring the edges that are $\varphi$-colored $c_2$).
		Next, we color all other planes of $G-E(D_1)$ correspondingly. 
		Now, since all planes of $G-E(D_1)$ are colored correspondingly,
		every edge of $D_1$ is adjacent to edges of $2d-2$
		different colors, so by Lemma \ref{lem:oddcycle},
		$\varphi$ is extendable.
		
		\bigskip
		
		{\bf Case 1.2.} {\em All precolored edges of $G-E(D_1)$ are contained in two planes:}

	Since at most two edges of $D_1$ are precolored, and only two planes
	contain precolored edges, there are two planes $Q_j$ and $Q_{j+1}$, 
	at most one of which contains precolored edges, and 
	such that there is no precolored edge between $Q_j$ and $Q_{j+1}$. Suppose e.g.
	$Q_{j+1}$ contains no precolored edges. Let 
	$c_2 \in \{1,\dots, 2d+1\}$ be a color such that no edge of 
	$G$ is precolored $c_2$.
	We take an extension of the restriction of $\varphi$ to $Q_j$ using colors
	$\{1,\dots, 2d+1\} \setminus \{c_1,c_2\}$, color $Q_{j+1}$ correspondingly, 
	and then color all edges between $Q_j$ and $Q_{j+1}$ by the unique color in
	$\{1,\dots, 2d+1\} \setminus \{c_1,c_2\}$ missing at its endpoints.
	Now, unless there are two precolored edges of 
	$D_1$ that are  contained in the same layer $P$
	and at even distance in the path $P'$ obtained from $P$ by removing
	the edge between $Q_j$ and $Q_{j+1}$, 
	we can color all edges of $D_1$ alternately by colors 
	$c_1$ and $c_2$, and then color all remaining planes of $Q-E(D_1)$ by 
	colors in $\{1,\dots, 2d+1\} \setminus \{c_1,c_2\}$ so 
	that the resulting edge coloring is proper and agrees with $\varphi$.
				
	Alternatively, if the distance between the two precolored edges of $D_1$ is
	even (in $P'$), then we select two additional planes 
	$Q_r$ and $Q_{r+1}$,
	containing no precolored edges between them and such 
	that at most one of $Q_r$ and
	$Q_{r+1}$ contains precolored edges. We may then repeat 
	the above coloring procedure for
	$Q_r$ and $Q_{r+1}$; we leave the details to the reader.

		\bigskip
		
		{\bf Case 1.3.} {\em All precolored edges of $G-E(D_1)$
		are contained in at least three planes:}

	Suppose first that there is only one precolored $e$ of $G-E(D_1)$,
	and let $Q_{j_1}, Q_{j_2}, \dots, Q_{j_s}$ be the planes of $G-E(D_1)$ that contain
	precolored edges, where $j_1 \leq j_2 \leq \dots \leq j_s$. Note that
	any two planes contain precolored edges of altogether at most $2d-2$ colors.
		Now as in Case 3 of the proof of the preceding lemma, we color the edges
		of the paths between pairs of planes with precolored edges by picking two colors
		that do not appear in the restrictions of $\varphi$ to these planes. 
		Naturally,
		we pick these colors so that in the path containing $e$, 
		the resulting coloring agrees with $\varphi$.
		Thereafter,
		we take extensions of the restrictions of $\varphi$ to the planes
		$Q_{j_1}, Q_{j_2}, \dots, Q_{j_s}$, so that the resulting coloring is proper.
		Hence, $\varphi$ is extendable.

		Suppose now that
		$G-E(D_1)$ contains two precolored edges $e_1$ and $e_2$.
		Then there are colors $c_2, c_3$ that do not appear on any edges of $G$ under $\varphi$.
		
		Now, if $e_1$ and $e_2$
		are corresponding edges or are not incident with a common plane, then we may proceed
		as in the preceding paragraph, but possibly pick
		three colors when coloring the paths between planes with precolored edges
		to ensure that the obtained coloring of $D_1$ is proper and agrees with $\varphi$.
		This is possible
		since any two planes contain at most $2d-3$ precolored edges.
		
		It remains to consider the case when $e_1$ and $e_2$ are incident with exactly
		one common plane $Q_1$. Suppose that $e_1$ in addition is incident with $Q_2$.
		If 
		there are at most $2d-4$ precolored edges in $Q_1 \cup Q_2$ 
		and at most $2d-4$ precolored edges in
		$Q_{2k+1} \cup Q_1$,
		then there are
		independent edges $e'_1$ and $e'_2$ in $Q_1$,
		adjacent to $e_1$ and $e_2$, respectively,
		and such that neither
		these edges, nor the corresponding edges of $Q_2$
		and $Q_{2k+1}$, respectively,
		are precolored.
		From the restriction of $\varphi$ to $Q_{2k+1} \cup Q_1 \cup Q_2$, we define a
		precoloring $\varphi'$ by coloring all these four edges of $Q_{2k+1} \cup Q_1 \cup Q_2$
		by the color $c_1$. We may now obtain an extension of $\varphi'$ by proceeding
		as in Case 3 of the preceding lemma, and thereafter swap colors on two bicolored $4$-cycles
		containing $e_1$ and $e_2$, respectively, to obtain an extension of $\varphi$.

		Suppose now instead that  $Q_1 \cup Q_{2k+1}$, say, contain exactly $2d-3$
		precolored edges. If there exist 
		independent edges $e'_1$ and $e'_2$ in $Q_1$,
		as described in the preceding paragraph, then we may proceed as in that case,
		so suppose that there are no two such edges.
		
		Then, since both $Q_{2k+1} \cup Q_1$ and $Q_1 \cup Q_2$ contain at most
		$2d-3$ precolored edges and every $Q_j$ is $(2d-2)$-regular, 
		the endpoints of $e_1$ and $e_2$ in $Q_1$ must be adjacent. Now,
		it is easy to see that this implies that there is either an edge $e'_1$ adjacent to $e_1$
		but not to $e_2$, such that $e'_1$ and the corresponding edge of $Q_2$ are not precolored,
		or an uncolored edge $e'_2$ adjacent to $e_2$ but not to $e_1$, and such that
		$e'_2$ and the corresponding edge of $Q_{2k+1}$ are not precolored.
		Suppose, for instance, that such an edge $e'_1$ exists.
	
		Consider the precoloring $\varphi'$
		obtained from the restriction of $\varphi$ to $Q_{2k+1} \cup Q_1 \cup Q_2$
		by in addition coloring $e'_1$ and also the corresponding edge of $Q_2$
		by the color $c_1$. Now, since there is no uncolored edge $e'_2$ as described above, it follows that
		all $d-2$ edges $a_1, \dots, a_{d-2}$ adjacent to $e_2$ in $Q_1$ satisfy
		that either $a_i$, or the corresponding edge of $Q_{2k+1}$, is $\varphi'$-precolored
		or adjacent to an edge colored $c_1$ under $\varphi'$.
		Moreover, since $Q_{2k+1} \cup Q_1$ is triangle-free and contains at most $2d-2$ $\varphi'$-precolored
		edges, every precolored edge of $Q_{2k+1} \cup Q_1$ satisfies this condition.
		Thus by properly coloring the uncolored edges adjacent to $e_2$,
		except the one adjacent to $e_1$, by colors from
		$\{1,\dots, 2d+1\} \setminus \{c_1, c_2,c_3\}$, we obtain a precoloring $\varphi''$
		from $\varphi'$. 
		Then every plane in $G-E(D_1)$ contains at most $2d-2$ precolored edges under $\varphi''$.
		Furthermore any extension of the restriction of $\varphi''$ to $Q_{2k+1} \cup Q_1$
		using colors $\{1,\dots, 2d+1\} \setminus \{c_2,c_3\}$ does not use $c_1$ on an edge
		adjacent to $e_2$. Now, since there is exactly one precolored edge
		of $G-E(D_1)$ that is not contained in $Q_{2k+1} \cup Q_1$, 
		we may once again proceed as in Case 3 of 
		Lemma \ref{cl:ODDoneempty} and color the edges of $D_1$ appropriately
		to obtain
		an extension of $\varphi''$ where no edge adjacent to $e_2$ is colored $c_1$. 
		Thereafter we may swap colors on a
		bicolored $4$-cycle and recolor $e_2$ to obtain an extension of $\varphi$.
		
		
		\bigskip
		
		{\bf Case 2.} {\em The precolored edges of $D_1$ are colored differently:}
		
		Suppose now that $D_1$ contains two precolored edges, colored $c_1$ and $c_2$, respectively,
		and that $c_3$ is a color that does not appear on any edge under $\varphi$.
		If there is an extension of the restriction of $\varphi$ to $D_1$ using colors $\{c_1,c_2,c_3\}$,
		such that all edges of $D_1$ are colored correspondingly, then there are extensions
		of the restrictions of $\varphi$ to all the planes $G-E(D_1)$ using colors that do not appear
		on incident edges of $D_1$. Hence, $\varphi$ is extendable. 
		
		On the other hand, if there is no such extension of the restriction of $\varphi$, then
the two precolored edges $e_1$ and $e_2$ of $D_1$ are incident with the same pair of planes, say $Q_1$ and $Q_2$. Now, if $Q_1 \cup Q_2$ contains at most $2d-4$ precolored edges,
		then there are uncolored corresponding edges $e'_1 \in E(Q_1)$ and 
		$e'_2 \in E(Q_2)$ that are adjacent to $e_2$, but not to $e_1$.
		We may now color these edges $c_2$ and remove the color from $e_2$
		to obtain the precoloring $\varphi'$ from $\varphi$, 
		and then proceed as in Case 3 when only one edge of $D_1$ is precolored to obtain
		an extension of $\varphi'$. Thereafter we swap colors on a 
		bicolored $4$-cycle to obtain an extension 
		of $\varphi$.
		
		If, on the other hand, $Q_1 \cup Q_2$ contains at least $2d-3$ precolored edges,
		then there is at most one edge in $G-E(D_1) \cup E(Q_1) \cup E(Q_2)$  that is precolored.
		Without loss of generality, we assume that $Q_3$ contains no precolored edge.
		By the induction hypothesis, there is an extension of the restriction of $\varphi$ to $Q_1 \cup Q_2$
		using colors $\{1,\dots, 2d+1\} \setminus \{c_1,c_2\}$. We color $Q_3$ correspondingly to
		how $Q_2$ is colored, and every edge between $Q_2$ and $Q_3$ by the color in
		 $\{1,\dots, 2d+1\} \setminus \{c_1,c_2\}$ missing at its endpoints. All other edges in $D_1$
		 are colored $c_1, c_2$ alternately so that the coloring agrees with $\varphi$.
		 Finally, we color all hitherto uncolored planes using colors $\{1,\dots, 2d+1\} \setminus \{c_1,c_2\}$
		 so that the resulting coloring agrees with $\varphi$. In conclusion, $\varphi$ is extendable.
		\end{proof}


		\begin{lemma}
	\label{cl:ODDnofree}
		If each dimension of $G$ contains precolored edges and there is a dimension
		with exactly one precolored edge, the color of which does appear on
		edges in other dimensions of $G$,
		then $\varphi$ is extendable.
	\end{lemma}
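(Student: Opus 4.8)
The plan is to induct on $d$, the case $d=1$ being trivial, and to reproduce the structure of the proof of Lemma~\ref{cl:noemptynofree}, adapting it to odd cycles. Fix the dimension $D_1$ with exactly one precolored edge $e$, say $\varphi(e)=c_1$; by hypothesis $c_1$ also colors an edge in another dimension, so $G-E(D_1)$ contains a $c_1$-colored edge, at most $2d-1$ precolored edges in total, and there are two colors $c_2,c_3$ used nowhere under $\varphi$. Writing $G-E(D_1)=Q_1\cup\dots\cup Q_{2k+1}$ with $Q_j$ adjacent to $Q_{j+1}$, I would split into the cases that the precolored edges of $G-E(D_1)$ lie in exactly one plane, in exactly two planes, or in at least three planes. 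In each case the planes are colored first (using the induction hypothesis on $C^{d-1}_{2k+1}$, deleting and later restoring $c_1$-edges whenever a plane carries more than $2d-2$ precolored edges), and the edges of $D_1$ are then colored one odd-cycle layer at a time. The substitute for $2$-edge-choosability is the observation that a cycle having one edge-list of size at least $3$ and all remaining lists of size at least $2$ is greedily colorable (color it minus that edge as a path, color that edge last), together with Lemma~\ref{lem:oddcycle} for the uniform $2$-list cases.

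When all precolored edges of $G-E(D_1)$ lie in one plane $Q_1$, that plane carries a $c_1$-edge, so removing its $c_1$-edges leaves at most $2d-2$ precolored edges; extend by the induction hypothesis using $\{1,\dots,2d+1\}\setminus\{c_1,c_2\}$, restore the $c_1$-edges to get a coloring $f'$ of $Q_1$ using $c_1$ only on precolored edges and never using $c_2$, and color every other plane correspondingly. Each layer of $D_1$ missing $e$ then has a $3$-uniform list and is colored greedily; for the layer through $e$, if $c_1$ is missing at both (corresponding) endpoints of $e$ under the plane colorings — which is automatic when $e$ is incident with $Q_1$, by properness of $\varphi$ — color $e$ by $c_1$ and finish greedily, and otherwise the endpoints of $e$ meet two corresponding $c_1$-edges $e_1,e_2$, so, denoting by $e_3$ the fourth edge of the $4$-cycle on $e,e_1,e_2$, color the layers through $e$ and $e_3$ so that $e$ and $e_3$ both receive $c_2$ and then swap $c_1$ and $c_2$ on this bicolored $4$-cycle, exactly as in Lemma~\ref{cl:noemptynofree}. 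When the precolored edges lie in two planes $Q_1,Q_i$ (each with at most $2d-2$ precolored edges): if $e$ is incident with neither, extend $\varphi$ on $Q_1$ and on $Q_i$ using $\{1,\dots,2d+1\}\setminus\{c_2,c_3\}$, color the two arcs of planes between $Q_1$ and $Q_i$ correspondingly to $Q_1$ and to $Q_i$ (since $2k+1\ge5$, at least one arc contributes an internal layer-edge, giving each layer a list of size $3$), and treat $e$ as above; if $e$ is incident with exactly one of them, argue the same way, inserting a $c_1\leftrightarrow c_2$ swap if the extension of that plane uses $c_1$ near $e$; and if $e$ is incident with both (so $i=2$), delete and restore the $c_1$-edges of $Q_1\cup Q_2$, extend using $\{1,\dots,2d+1\}\setminus\{c_1,c_3\}$, color the $D_1$-edges between $Q_1$ and $Q_2$ by $c_3$ except $e$ by $c_1$ (legitimate since $\varphi$ is proper), and finish as in the corresponding part of Lemma~\ref{cl:noemptynofree}, leaving one edge of each layer to be colored last, as is usual with odd layers.

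When at least three planes carry precolored edges, every plane carries at most $2d-3$ of them and any two carry at most $2d-2$ together. Say $e$ lies between $Q_m$ and $Q_{m+1}$. Since the neighbours of a vertex in a plane $C^{d-1}_{2k+1}$ form an independent set (here $2k+1\ge5$), a degree count shows one can choose uncolored corresponding edges $e_1\in E(Q_m)$, $e_2\in E(Q_{m+1})$, adjacent to $e$ and not adjacent to any $\varphi$-precolored $c_1$-edge; let $\varphi'$ be obtained from $\varphi$ restricted to $G-E(D_1)$ by coloring $e_1,e_2$ with $c_1$. Then each plane carries at most $2d-2$ $\varphi'$-precolored edges and $Q_m,Q_{m+1}$ are both $\varphi'$-precolored. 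I would then run the path-partition argument of Case~3 of Lemma~\ref{cl:ODDoneempty}: partition each layer of $D_1$ into the paths joining consecutive $\varphi'$-precolored planes, color each such path alternately by a pair of colours avoiding $\varphi'$ on the two incident planes (hence, since $c_1$ is now used on $Q_m$ and $Q_{m+1}$, automatically avoiding $c_1$ there), using $c_2$ on every length-one path between $Q_m$ and $Q_{m+1}$, and extend each plane by the induction hypothesis using colours disjoint from those of its incident $D_1$-edges. Finally $e$ and the fourth edge $e_3$ of the $4$-cycle on $e,e_1,e_2$ both carry $c_2$, so swapping $c_1$ and $c_2$ on that $4$-cycle yields an extension of $\varphi$.

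The last case is where I expect the real work to lie. One must check simultaneously that the colour-pairs for all layer-paths can be chosen — which is where the bound ``any two planes carry at most $2d-2$ precolored edges'' is used, and which is tight for odd cycles, so that the configuration in which $Q_m\cup Q_{m+1}$ already carries $2d-2$ precolored edges needs a separate treatment (then exactly one further plane is precolored, and one argues by first extending the saturated plane) — that the plane extensions can be made to avoid their incident $D_1$-colours while respecting the forced $c_1$ on $e_1,e_2$, and that the concluding Kempe swap preserves properness: the last holds because $c_2$ is absent from $G-E(D_1)$, because the planes meeting $e$ were extended without $c_2$, and because the $c_1$-edges $e_1,e_2$ keep $c_1$ off the two $D_1$-edges at the outer corners of the $4$-cycle. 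The only other place needing a swap is the subcase of two precolored planes in which $e$ meets a precolored plane whose extension is forced to use $c_1$ near $e$, handled identically.
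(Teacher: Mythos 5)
Your overall strategy is the same as the paper's: fix the dimension $D_1$ with the single precolored edge $e$ of colour $c_1$, observe that two colours $c_2,c_3$ are unused, split on whether the precolored edges of $G-E(D_1)$ occupy one, two, or at least three planes, extend the planes by induction (deleting and restoring $c_1$-edges when a plane is oversaturated), colour the odd layers of $D_1$ via a size-$3$ list or Lemma~\ref{lem:oddcycle}, and repair $e$ with a bicoloured $4$-cycle swap when needed. Cases 1 and 2 are handled essentially as in the paper.

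The genuine gap is in Case 3, in exactly the configuration you flag but do not resolve: when $Q_m\cup Q_{m+1}$ carries $2d-2$ precolored edges. Your degree count (each precolored edge kills at most one of the $2d-2$ corresponding pairs adjacent to $e$, by triangle-freeness) is tight there, so the uncolored corresponding pair $e_1,e_2$ need not exist, and the entire mechanism of your Case 3 — plant two $c_1$-edges forming a $4$-cycle with $e$, run the path-partition argument, swap — is unavailable. The one-phrase remedy ``first extend the saturated plane'' does not close this: the induction hypothesis gives you no control over \emph{where} $c_1$ lands in that extension, and if $c_1$ appears on an edge of $Q_m$ adjacent to $e$ whose corresponding edge of $Q_{m+1}$ is \emph{not} coloured $c_1$, then $e$ can neither be coloured $c_1$ directly nor rescued by a bicoloured $4$-cycle through $e$. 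What is needed (and what the paper does) is a preprocessing step: show that in this tight configuration every edge adjacent to $e$ either is (or corresponds to) a precolored edge of colour $\neq c_1$, or is (or corresponds to) an edge adjacent to a $c_1$-precolored edge; then precolour the remaining neighbours of $e$ with colours from $\{1,\dots,2d+1\}\setminus\{c_1,c_2,c_3\}$ so that \emph{any} extension of $Q_m\cup Q_{m+1}$ avoiding $c_2,c_3$ keeps $c_1$ off both endpoints of $e$, after which $e$ is coloured $c_1$ outright and no swap is required. Without this (or an equivalent device), the proof is incomplete in its hardest subcase.
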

	\begin{proof}
			Let $D_1$ be a dimension containing only one precolored
		edge $e$, colored, say $c_1$, and consider the 
		subgraph $G-E(D_1)$ consisting of $2k+1$ planes
		$Q_1, \dots, Q_{2k+1}$.
		Since at least one color appears on at least two edges, there are
		two colors $c_2,c_3 \in \{1,\dots, 2d+1\}$ that do not appear
		on any edge under $\varphi$.
	
			\bigskip

		{\bf Case 1.} {\em All precolored edges of $G-E(D_1)$ are contained in one plane:}
		
		In this case, we may proceed as in Case 1 of 
		Lemma \ref{cl:noemptynofree}, but use
		Lemma \ref{lem:oddcycle} instead of Lemma \ref{lem:evencycle}. We omit the details.

		\bigskip
		
		{\bf Case 2.} {\em All precolored edges of $G-E(D_1)$ are contained in two planes:}

		Let $Q_1$ and $Q_i$ be the two planes containing the precolored edges
		distinct from $e$.
		
		Let us first consider the case when $e$ is not incident to $Q_1$ or $Q_i$. 
		Since each of $Q_1$ and $Q_i$ contains at most $2d-2$ precolored edges,
		there are extensions $f_1$ and $f_i$ of the restrictions of $\varphi$
		to $Q_1$ and $Q_i$, respectively, using colors $\{1, \dots, 2d+1\}
		\setminus \{c_2,c_3\}$. Therafter we color the edges of $D_1$ properly and correspondingly
		using colors $\{c_1,c_2,c_3\}$ so that the coloring agrees with the restriction
		of $\varphi$ to $D_1$ and has no conflicts with $f_1$ or $f_i$. Finally
		we color the remaining planes of $G-E(D_1)$, as to obtain an extension of $\varphi$.
	
		\bigskip

		Suppose now that $e$ is incident with $Q_1$ and $Q_2$, and $i \neq 2$.
		Then either $Q_3$ or $Q_{2k+1}$ contains no precolored edges;
		suppose $Q_3$. (The case when $Q_{2k+1}$
		has this property is similar.)
		As in the preceding paragraph,
		there are extensions $f_1$ and $f_i$ of the restrictions of $\varphi$
		to $Q_1$ and $Q_i$, respectively, using colors $\{1, \dots, 2d+1\}
		\setminus \{c_2,c_3\}$. We color $Q_2$ and $Q_3$ correspondingly to how
		$Q_1$ is colored.
		Next, we color every edge of $D_1$ between $Q_2$ and $Q_3$
		by a color in $\{1,\dots, 2d+1\} \setminus \{c_2, c_3\}$ missing at its endpoints,
		and then color all other edges of $D_1$ alternately by colors $c_2$ and $c_3$ so
		that all edges between $Q_1$ and $Q_2$ are colored $c_2$. The remaining
		uncolored planes of $G$ are properly colored using colors
		$\{1,\dots, 2d+1\} \setminus \{c_2, c_3\}$.
		Now, if $e$ is adjacent to edges colored $c_1$, 
		then we swap colors on a bicolored $4$-cycle containing $e$ and two edges colored
		$c_1$ to obtain an extension of $\varphi$; otherwise we simply recolor $e$
		to obtain an extension of $\varphi$.
		
		\bigskip
		
		Suppose now that $e$ is incident with $Q_1$ and $Q_2$, and $i = 2$.
		By removing the color from any edge that is colored $c_1$ under $\varphi$,
		we obtain a precoloring $\varphi'$ of $G$. The restriction of
		$\varphi'$ to $Q_1$ and $Q_2$ are extendable to proper edge colorings,
		respectively, using colors $\{1,\dots, 2d+1\} \setminus \{c_1,c_2\}$.
		By recoloring any edge of $Q_1$ and $Q_2$ that is $\varphi$-colored 
		$c_1$ by the color $c_1$
		we obtain edge colorings $f_1$ of $Q_1$ and $f_2$ of $Q_2$, respectively.

		Next, we color every edge between $Q_1$ and $Q_2$ by the color $c_2$
		except that $e$ is colored $c_1$. Thereafter, we color
		$Q_3, \dots, Q_{2k}$ correspondingly to how $Q_2$ is colored, 
		and $Q_{2k+1}$ correspondingly
		to how $Q_1$ is colored. Now, 
		for every vertex $x$ of $Q_2$, there are colors $c_x, c'_x \in \{1,\dots, 2d+1\}$
		that do not appear at $x$ in $Q_2$ or on the incident edge between $Q_1$ and $Q_2$.
		We color every path in $D_1$ from $Q_2$ to $Q_{2k}$ by colors
		$c_x$ and $c'_x$ alternately, and thereafter color every edge between $Q_{2k}$
		and $Q_{2k+1}$ by the color of the edge in the same layer between $Q_1$ and $Q_2$.
		Finally, we color the edges between $Q_1$ and $Q_{2k+1}$ by a color missing at its
		endpoints to obtain an extension of $\varphi$.

		\bigskip
		
		{\bf Case 3.} {\em All precolored edges of $G-E(D_1)$ are contained in at least three planes:}

	The assumption implies that every plane in $G-E(D_1)$ contains at most
	$2d-3$ precolored edges. Assume that $e$ is incident with $Q_1$ and $Q_2$.
		
	Suppose first that $Q_1 \cup Q_2$ contains altogether at most $2d-3$ precolored
		edges. Then there are uncolored corresponding
		edges $e_1 \in E(Q_1)$ and $e_2 \in E(Q_2)$ that are adjacent to $e$ but not to
		any edge in $Q_1 \cup Q_2$ $\varphi$-colored $c_1$. From the restriction of
		$\varphi$ to $G-E(D_1)$ we define a new precoloring 
		$\varphi'$ by coloring $e_1$
		and $e_2$ by the color $c_1$.
		Thereafter we may proceed as in Case 3 of the proof of
		Lemma \ref{cl:ODDoneempty} to obtain a proper $(2d+1)$-edge coloring of
		$G$ which is an extension of $\varphi'$ and where the edges of $D_1$ are colored
		correspondingly.
		Thus, by swapping colors on a bicolored
		$4$-cycle we obtain an extension of $\varphi$.
		
		Suppose now that $Q_1 \cup Q_2$ contains altogether $2d-2$ precolored
		edges. If there are uncolored corresponding
		edges $e_1 \in E(Q_1)$ and $e_2 \in E(Q_2)$ that are adjacent to $e$ but not to
		any edge in $Q_1 \cup Q_2$ $\varphi$-colored $c_1$, then we proceed
		as in the preceding paragraph. So assume that there are no such edges
		$e_1$ and $e_2$. Then there are $2d-2$ edges $e_1,\dots, e_{2d-2}$ in $Q_1$
		that are adjacent to $e$ and such that each of these edges
		satisfies that
		\begin{itemize}
		
		\item[(i)] $e_j$ or the corresponding edge of $Q_2$ is precolored by a color distinct 
		from $c_1$, or
		
		\item[(ii)] $e_j$ or the corresponding edge of $Q_2$ is adjacent to an edge
		precolored $c_1$.
		
		\end{itemize}
		Moreover, since $Q_1 \cup Q_2$ is triangle-free and
		contains at most $2d-2$ precolored edges,
		every precolored edge in $Q_1 \cup Q_2$ satisfies one of these conditions. Now,
		for $j=1,2$, from the restriction of $\varphi$ to $Q_j$,
		we define a new precoloring $\varphi_j$ of $Q_j$
		by coloring every edge of $Q_1$ and $Q_2$ that is adjacent
		to $e$ and does not satisfy
		(i) or (ii) by a color in $\{1,\dots, 2d+1\} \setminus\{c_1,c_2,c_3\}$ so that the
		resulting coloring is proper and agrees with $\varphi$. Now, by the induction 
		hypothesis, $\varphi_j$ is extendable to a proper edge coloring of $Q_j$ using
		colors $\{1,\dots, 2d+1\} \setminus\{c_2,c_3\}$. Note that no edge of 
		$Q_1$ or $Q_2$
		adjacent to $e$ is colored $c_1$ in these colorings. 
		Thus we may color all edges between
		$Q_1$ and $Q_2$ by $c_2$ except $e$ which is colored $c_1$.
		
		Next, suppose that $Q_r$, $r\notin\{1,2\}$, is the third plane containing a precolored
		edge. Then $Q_{r+1}$ or $Q_{r-1}$ contains no precolored or hitherto colored
		edges, suppose $Q_{r+1}$.
		We take an extension of the restriction of $\varphi$ to $Q_r$ 
		using colors $\{1,\dots, 2d+1\} \setminus\{c_2,c_3\}$, and color all other uncolored
		planes correspondingly to how $Q_r$ is colored. Thereafter we color the edges
		between $Q_r$ and $Q_{r+1}$ by the unique color in
		$\{1,\dots, 2d+1\} \setminus\{c_2,c_3\}$ missing at it endpoints.
		Finally, we color all remaining uncolored edges of $D_1$ by colors 
		$c_2,c_3$ alternately so that the resulting coloring is proper.
		This yields an extension of $\varphi$.
	\end{proof}


		\begin{lemma}
	\label{cl:ODD2edges}
		If each dimension of $G$ contains exactly two precolored edges, 
		at least one of which is colored
		by a color appearing on precolored edges in other dimensions, then $\varphi$
		is extendable.
	\end{lemma}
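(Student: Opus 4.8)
\textit{Proof sketch.} The plan is to run exactly the same scheme as in the proofs of Lemmas~\ref{cl:ODDoneempty}--\ref{cl:ODDnofree}: fix a suitable dimension $D_1$, work in $G-E(D_1)$ with its planes $Q_1,\dots,Q_{2k+1}$, extend the restriction of $\varphi$ to the planes via the induction hypothesis, and finally colour the layers of $D_1$ with the help of Lemma~\ref{lem:oddcycle} and some bicolored $4$-cycle swaps. First I would record the counting. Since $G$ has $d$ dimensions, at most $2d$ precolored edges, and (by assumption) no dimension falls under Lemma~\ref{cl:ODDoneempty} or Lemma~\ref{cl:ODDnoempty2free}, every dimension contains exactly two precolored edges and $\varphi$ colours exactly $2d$ edges. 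As some colour occurs on precolored edges of two distinct dimensions, $\varphi$ uses at most $2d-1$ colours, so there are two colours $c_2,c_3\in\{1,\dots,2d+1\}$ not used by $\varphi$. I now choose $D_1$ to be a dimension carrying a precolored edge $e$ whose colour $c_1$ also occurs on a precolored edge of another dimension, and let $e'$ (colour $c'$) be the second precolored edge of $D_1$; then $G-E(D_1)$ contains exactly $2d-2$ precolored edges.

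Next I would split into the usual three cases according to how these $2d-2$ precolored edges are distributed over $Q_1,\dots,Q_{2k+1}$, using in each case the same manoeuvre: extend the planes by the induction hypothesis so that one fixed unused colour is globally missing on $G-E(D_1)$, colour planes correspondingly where possible, then colour the odd layers of $D_1$ by Lemma~\ref{lem:oddcycle}, and repair the two precoloured edges $e,e'$ by swapping on bicolored $4$-cycles as in the even case. If all $2d-2$ precolored edges lie in one plane $Q_1$, delete colours $c_1,c_2$ from $\varphi|_{Q_1}$, extend by induction using $\{1,\dots,2d+1\}\setminus\{c_1,c_2\}$, recolour the $c_1$-edges, and copy the resulting colouring to all other planes; since $c_2$ is then missing throughout $G-E(D_1)$, every layer edge of $D_1$ has at least two admissible colours, and one finishes as in Case~1 of Lemma~\ref{cl:noemptynofree} (placing $e,e'$ in their layers and swapping on a bicolored $4$-cycle if $c_1$, resp.\ $c'$, is obstructed at an endpoint). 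If the $2d-2$ precolored edges lie in exactly two planes, each of those planes carries at most $2d-3$ of them, hence both restrictions extend to proper $(2d-1)$-edge colourings avoiding $c_2,c_3$; one then copies colourings between the remaining planes and colours the layers of $D_1$ mainly with $c_2,c_3$ (or with two fresh colours per consecutive pair of planes), exactly as in Case~2 of Lemma~\ref{cl:noemptynofree} and Case~2 of Lemma~\ref{cl:ODDnofree}. Finally, if the precolored edges of $G-E(D_1)$ meet at least three planes, every plane carries at most $2d-4$ of them; after colouring two corresponding edges adjacent to $e$ (and, if needed, to $e'$) with $c_1$ (resp.\ $c'$) to prepare the repairing $4$-cycles, each plane still has at most $2d-3$ precolored edges, so by induction $\varphi$ together with these extra edges extends to $G-E(D_1)$ using $\{1,\dots,2d+1\}\setminus\{c_2,c_3\}$; colouring the layers of $D_1$ with $c_2,c_3$ as in Case~3 of Lemma~\ref{cl:ODDoneempty} and then swapping on the $4$-cycles produces the desired extension.

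The main obstacle — and the reason this case is isolated in its own lemma — is that $D_1$ now carries \emph{two} precolored edges rather than one, so the colouring of the layers of $D_1$ must respect both constraints simultaneously. The delicate sub-cases are those in which $e$ and $e'$ share a layer of $D_1$, in which $c'=c_1$, or in which $e$ or $e'$ is incident with one of the (at most two) planes that carry the remaining precolored edges. In the worst of these one is forced to perform two bicolored $4$-cycle swaps, and one must check that they involve disjoint colours or vertex-disjoint $4$-cycles so that they do not interfere; and, as always in the odd case, one must keep an unused colour available along each odd layer of $D_1$ so that Lemma~\ref{lem:oddcycle} applies. All of these checks are routine but tedious, which is why the authors only sketch them; the plan is to write out one representative sub-case in each of the three distribution cases and indicate how the remaining ones reduce to techniques already used in Lemmas~\ref{cl:noemptynofree}, \ref{cl:ODDoneempty} and~\ref{cl:ODDnofree}. \qed
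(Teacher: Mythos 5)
Your overall strategy coincides with the paper's: the same counting to get two unused colours, the same case split on how the $2d-2$ precoloured edges of $G-E(D_1)$ are distributed over the planes, the same use of the induction hypothesis on planes, of Lemma~\ref{lem:oddcycle} on the layers of $D_1$, and of bicolored $4$-cycle swaps to repair $e$ and $e'$. So the architecture is right. But there is one point where your sketch assumes something that can fail, and it is exactly the point where the paper has to work hardest.

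Your repair mechanism presumes that for each precoloured edge of $D_1$ one can always ``prepare'' a bicolored $4$-cycle by colouring a pair of corresponding uncoloured edges adjacent to it with its colour. When $e_1$ and $e_2$ are both incident with the same pair of planes $Q_1,Q_2$ (or share one plane) and $Q_1\cup Q_2$ already carries $2d-3$ or $2d-2$ of the precoloured edges, such a pair of edges need not exist for \emph{both} $e_1$ and $e_2$: every candidate edge at one of the two endpoints may be precoloured, correspond to a precoloured edge, or be adjacent to an edge already carrying the relevant colour. The paper isolates this in Claim~\ref{cl:edge}, a pigeonhole argument using $(2d-2)$-regularity and triangle-freeness to show that at least \emph{one} of the two repairs can be prepared; for the other edge it then uses a different device entirely, namely pre-colouring all uncoloured edges adjacent to it with colours distinct from $c_1$ so that \emph{no} extension places $c_1$ next to it, after which that edge can simply be recoloured at the end without any swap. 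Your sentence about checking that two swaps ``involve disjoint colours or vertex-disjoint $4$-cycles'' addresses interference between two existing swaps, not the case where one of the swaps cannot be set up at all; calling the remaining checks ``routine but tedious'' passes over this missing idea. A complete write-up along your plan would need to supply Claim~\ref{cl:edge} (or an equivalent) and the fallback repair for the blocked edge, as well as the analogous saturation analysis in Subcases 2.1.1, 2.2.1--2.2.2 and 3.1.2, 3.2.1--3.2.2 of the paper.
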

	
	\begin{proof}
	Let $D_1$ be a dimension containing two precolored 
	edges $e_1$ and $e_2$.
	By assumption,
	at most $2d-1$ colors appear on edges under $\varphi$, so let let $c_3, c_4$
	be two colors from $\{1,\dots, 2d+1\}$ that do not appear on any edges
	under $\varphi$.

	\bigskip
	
	{\bf Case 1.} {\em All precolored edges of $G-E(D_1)$ are 
	contained in one plane:}
	
	Suppose that all precolored edges except $e_1$ and $e_2$ lie in
	one component $Q_1$ of $G-E(D_1)$.
	Without loss of generality, we assume that 
	$\{\varphi(e_1), \varphi(e_2)\} \subseteq \{c_1,c_2\}$. We define a new precoloring
	$\varphi'$ from the restriction of $\varphi$ to $Q_1$ by removing the colors $c_1$
	and $c_2$ from any edges of $Q_1$ colored by these colors. Now, by the induction 
	hypothesis $\varphi'$ is extendable to a proper $(2d-1)$-edge 
	coloring of $Q_1$ using
	colors $\{1,\dots, 2d+1\} \setminus \{c_1,c_2\}$. By recoloring the edges of $Q_1$
	that are $\varphi$-colored $c_1$ and $c_2$ by colors $c_1$ and $c_2$, respectively,
	we obtain a proper edge coloring $f'$ of $Q_1$.
	Next, we color all other planes of $G-E(D_1)$ correspondingly and
	define a list assignment for the edges of $D_1$ by assigning every edge
	the set of colors from $\{1,\dots, 2d+1\}$ not appearing on its adjacent edges. Then
	each edge of
	$D_1$ receives a list of $3$ colors except for the two edges $e_1$ and $e_2$ that
	are precolored. Thus, there is an extension of $\varphi$.
		
		\bigskip
		
	{\bf Case 2.} {\em All precolored edges of $G-E(D_1)$ are contained in two planes:}
		
			We shall consider several different subcases.
			
		\bigskip
		
		{\bf Case 2.1.} {\em The precolored edges of $D_1$ have the same 
		color under $\varphi$:}
		
		Suppose that $\varphi(e_1) = \varphi(e_2) = c_1$. 
		If $e_1$ and $e_2$ are both incident with the same two planes,
		then we may apply arguments which are similar
		to the ones in Case 2 of the proof of Lemma \ref{cl:ODDnofree}.
		Consequently, assume that $e_1$ and $e_2$ are incident with at
		most one common plane.
		
		\bigskip
		
		{\bf Subcase 2.1.1.} {\em $e_1$ and $e_2$ are both incident with
		exactly one common plane:}
		
		Suppose that $e_1$ and $e_2$ are both incident with 
		the common plane $Q_1$, and that $e_1$ is also incident with $Q_2$.
		 If $Q_1$ and $Q_2$ contain all precolored edges of $G-E(D_1)$, then
		a similar argument as in the subcase of Case 2 of Lemma \ref{cl:ODDnofree} when $Q_1 \cup Q_2$
		contains all precolored edges of $G-E(D_1)$ again applies, so we omit the details
		here as well.
		
		It remains to consider the following subcases:
		\begin{itemize}
		
			\item[(a)] $Q_1$ contains precolored edges, but neither of $Q_2$ and
			$Q_{2k+1}$.
		
			\item[(b)] Either $Q_{2k+1}$ or $Q_2$, but not $Q_1$, contains precolored edges.
			
			\item[(c)] Both $Q_2$ and $Q_{2k+1}$ contain precolored edges.
		
		\end{itemize}
		{\em (a) holds:}
		
		Suppose that $Q_1$ and $Q_i$ contain all precolored edges of $G-E(D_1)$,
		where $i\notin \{1,2,3,2k+1\}$.
		If $e_1$ and $e_2$ are adjacent via an uncolored 
		edge $e$ in $Q_1$, then since
		$Q_1$ contains at most $2d-3$ precolored edges, there is a color 
		$c \in \{1,\dots, 2d+1\} \setminus \{c_1,c_3,c_4\}$ 
		that does not appear on any edge adjacent to $e$.
		Thus the precoloring $\varphi'$
		obtained from $\varphi$ by in addition coloring $e$ by the color $c$
		is proper.  On the other hand, 
		if there is no such edge, then we set $\varphi' = \varphi$.
		
		Now, since both $Q_1$ and $Q_i$ contain at most $2d-2$ $\varphi'$-precolored edges, there
		are extensions $f_1$ and $f_i$, respectively,
		of the restrictions of $\varphi'$ to $Q_1$ and $Q_i$, respectively, using
		colors $\{1,\dots, 2d+1\} \setminus \{c_3, c_4\}$. We color
		$Q_2, Q_3, Q_{2k+1}$ correspondingly to how $Q_1$ is colored, and thereafter 
		color every edge between $Q_2$ and $Q_3$ by the color in
		$\{1,\dots, 2d+1\} \setminus \{c_3, c_4\}$ missing at its endpoints. 
		
		Next, we color all hitherto 
		uncolored edges of $D_1$ alternately using colors $c_3, c_4$
		so that all edges between $Q_1$ and $Q_2$ have color $c_3$, 
		and also color all hitherto uncolored planes properly using colors
		$\{1,\dots, 2d+1\} \setminus \{c_3, c_4\}$. The obtained coloring
		is proper and agrees with $\varphi'$ except for $e_1$ and $e_2$.
		Now, if neither $e_1$ of $e_2$ are adjacent to an edge colored $c_1$, then we simply
		recolor them; otherwise, we swap on one or two bicolored cycles to
		obtain an extension of $\varphi$;
		note that if both $e_1$ and $e_2$ are adjacent to edges colored $c_1$, then
		these cycles are disjoint. 
		Hence, $\varphi$
		is extendable.
		
		\bigskip
		
		{\em (b) holds:}
		
		If instead either $Q_{2k+1}$ or $Q_2$, but not $Q_1$, contains precolored edges,
		then a similar argument as in (a) applies, so we omit the details.

		\bigskip
		
		{\em (c) holds:}
		
		Assume that $Q_2$ and $Q_{2k+1}$ contain all precolored edges of $G-E(D_1)$, and let
		$u_{2k+1}$ and $u_2$ be the vertices of $Q_{2k+1}$ and $Q_{2}$
		that are incident with $e_1$ and $e_2$, respectively.
		
		If both $Q_{2k+1}$ and $Q_2$ contain at most $2d-4$ precolored edges,
		then there are uncolored edges $e'_{2k+1} \in E(Q_{2k+1})$ 
		and $e'_{2} \in E(Q_{2})$ that are incident
		with $u_{2k+1}$ and $u_2$, respectively, not adjacent to any edges
		of $Q_{2k+1} \cup Q_2$ precolored $c_1$, and
		such that the corresponding edges of $Q_1$
		are independent.
		We color $e'_{2k+1}$, $e'_{2}$, and also the corresponding edges
		of $Q_1$ by the color $c_1$. Together with $\varphi$, this defines a precoloring
		of $Q_{2k+1} \cup Q_1 \cup Q_2$, which by the induction hypothesis is
		extendable to a proper edge coloring using colors 
		$\{1,\dots, 2d+1\} \setminus \{c_3,c_4\}$. We color all edges between $Q_1$ and $Q_2$
		with color $c_4$, and all edges between $Q_{2k+1}$ and $Q_1$ by the color $c_3$.
		Next, we color $Q_{2k}$ and $Q_3$ correspondingly to how $Q_{2k+1}$ and $Q_2$
		are colored, respectively. Thereafter we color all edges between $Q_{2k+1}$
		and $Q_{2k}$ by the color in $\{1,\dots, 2d+1\} \setminus \{c_3,c_4\}$ missing
		at its endpoints, and similarly for $Q_2$ and $Q_3$.
	Finally, we color all uncolored edges of $D_1$ alternately by colors $c_3, c_4$,
	color all hitherto uncolored planes using colors 
	$\{1,\dots, 2d+1\} \setminus \{c_3, c_4\}$ and swap on two 
	bicolored cycles containing
	$e_1$ and $e_2$, respectively, to obtain an extension of $\varphi$.

	Suppose now instead that one of $Q_{2k+1}$ and $Q_2$ contains 
	$2d-3$ precolored edges, say $Q_{2k+1}$. 
	Then $Q_{2}$ contains exactly one precolored edge $e$. 
	By removing the color from any
		edge of $Q_{2k+1}$ that is precolored $c_1$, we obtain a precoloring $\varphi'$ from
		the restriction of $\varphi$ to $Q_{2k+1}$. 
		$\varphi'$ is extendable to a proper coloring of $Q_{2k+1}$ using colors 
		$\{1,\dots, 2d+1\} \setminus \{c_1,c_3\}$ and by recoloring the edges of $Q_{2k+1}$
		$\varphi$-colored $c_1$ by the color $c_1$ we obtain an extension $f_{2k+1}$ of 
		the restriction of $\varphi$ to $Q_{2k+1}$.
		
		Next, we color $Q_1$ correspondingly to $Q_{2k+1}$ except that we color any edge
		of $Q_1$ corresponding to an edge colored $c_1$ by the color $c_3$.
		We color the edges between $Q_{2k+1}$ and $Q_1$ by an arbitrary color 
		in $\{1,\dots, 2d+1\} \setminus \{c_1,c_3\}$
		not appearing at its endpoints, except that $e_2$ is colored $c_1$.
		
		Suppose first that $e$ is precolored $c_1$. Then we color $Q_2$
		correspondingly to how $Q_1$ is colored, but color $e$ by color $c_1$.
		Next we color all edges between $Q_1$ and $Q_2$ by an arbitrary color
		in $\{1,\dots, 2d+1\}$
		missing at its endpoints except that $e_1$ is colored $c_1$.
		Thereafter, we color $Q_3$ correspondingly to how $Q_1$ is colored,
		and all remaining uncolored
		planes correspondingly to how $Q_{2k+1}$ is colored. We may then
		color the hitherto uncolored edges of $D_1$ appropriately to obtain an extension
		of $\varphi$.

		Suppose now that the precolored edge of $Q_2$ is colored $c_2 \neq c_1$.
		Let $e'$ be the edge of $Q_1$ corresponding to $e$, and assume that $e'$
		is colored $c'$ in the hiherto constructed coloring.
		We color $Q_2$ correspondingly to how $Q_1$ is colored but permute the colors
		$c_2$ and $c'$ in the coloring of $Q_2$. Thereafter, we color $Q_3$ correspondingly to $Q_2$
		except that we permute colors $c_2$ and $c'$, and finally we color the remaining
		uncolored edges of $G$ by proceeding as in the preceding paragraph.

		\bigskip
		
		\bigskip
		
		{\bf Subcase 2.1.2} {\em $e_1$ and $e_2$ are not incident with
		a common plane:}

		Suppose that $e_1$ is incident with $Q_1$ and $Q_2$
		and $e_2$ is incident with $Q_j$ and $Q_{j+1}$, and all these four planes are
		distinct.
		If all precolored edges are contained in $Q_1 \cup Q_2$, then 
		as before we may then select corresponding uncolored edges $e'_j$ and $e'_{j+1}$
		of $Q_j$ and $Q_{j+1}$ that are adjacent to $e_2$.
		Next, we consider the precoloring $\varphi'$
		obtained from $\varphi$ by coloring $e'_j$ and $e'_{j+1}$ by $c_1$ and 
		removing the color
		$c_1$ from $e_2$. We may now apply similar arguments as in the subcase of Case 3 of
		Lemma \ref{cl:ODDnofree} when $Q_1 \cup Q_2$ contains $2d-2$ precolored edges
		to obtain an extension of $\varphi'$. In particular, 
		since there is at least one plane in
		$G$ that is distinct from $Q_1, Q_2, Q_j, Q_{j+1}$
		that contains no $\varphi'$-colored edges,
		we can
		color the edges of $D_1$ so that all edges between $Q_{j}$ and $Q_{j+1}$
		have the same color. We may then swap on a bicolored $4$-cycle to obtain an 
		extension
		of $\varphi$.

	Suppose now that exactly one of the planes $Q_1$ and $Q_2$, and exactly one of 
	the planes $Q_{j}$ and $Q_{j+1}$ contain precolored edges.
	Assume e.g. that $Q_1$ and $Q_j$ contain no precolored edges (the other cases
	are analogous). We pick an edge $e'_2$ in $Q_2$ that is uncolored 
	and adjacent to $e_1$, but not adjacent
	to any other edge precolored $c_1$, and a similar edge $e'_{j+1}$ of $Q_{j+1}$;
	since each of these planes contains at most $2d-3$ precolored edges, such edges
		exist.
		From the restriction of $\varphi$ to $Q_2 \cup Q_{j+1}$ we define a new precoloring
		$\varphi'$ by in addition coloring $e'_2$ and $e'_{j+1}$ $c_1$.
		Now, by the induction hypothesis, there are extensions $f_2$ and $f_{j+1}$ 
		of the restrictions of $\varphi'$ to $Q_2$ and $Q_{j+1}$, respectively,
		using colors $\{1,\dots, 2d+1\} \setminus \{c_3, c_4\}$.
		
		Let us now color the other planes of $G-E(D_1)$. Without loss of generality,
		we assume that $j+1 < 2k+1$. We color $Q_1$ and $Q_{2k+1}$ correspondingly to
		how $Q_2$ is colored, $Q_j$ correspondingly to how $Q_{j+1}$ is colored,
		and all other planes arbitrarily using colors
		$\{1,\dots, 2d+1\} \setminus \{c_3, c_4\}$.
		Thereafter we color all edges between $Q_1$ and $Q_{2k+1}$ by a color in
		$\{1,\dots, 2d+1\} \setminus \{c_3, c_4\}$ missing at its endpoints, and all other
		edges of $D_1$ alternately using colors $c_3,c_4$
		and starting with color $c_3$ at $Q_1$.
		Finally, we swap colors on two bicolored $4$-cycles containing $e_1$ and $e_2$,
		respectively, to obtain an extension of $\varphi$.
		
		Finally, we consider the case when $Q_1$ may contain precolored edges,
		but none of $Q_2, Q_j, Q_{j+1}$ contain precolored edges.
		We define a precoloring $\varphi'$ from the restriction of $\varphi$ to $Q_1$
		by selecting an edge $e'_1 \in E(Q_1)$ adjacent to $e_1$
		and coloring it $c_1$, as before. Thereafter, we take an extension of $\varphi'$
		using colors $\{1,\dots, 2d+1\} \setminus \{c_3, c_4\}$,
		and color $Q_2$ correspondingly. Next, we color all edges between 
		$Q_{j}$ and $Q_{j+1}$ by the color $c_1$, and all other edges of $D_1$ alternately
		using colors $c_3$ and $c_4$, and starting with color $c_3$ at $Q_{j+1}$.
		We now obtain an extension of $\varphi$ as before.

		\bigskip
		
		{\bf Case 2.2} {\em The precolored edges of $D_1$ are colored differently under
		$\varphi$:}
		
		Suppose that $\varphi(e_1) = c_1$ and $\varphi(e_2)=c_2$.
		We shall consider some different cases.
		
		\bigskip
		
		{\bf Subcase 2.2.1} {\em $e_1$ and $e_2$ are both incident with
		two common planes:}
		
		Suppose that $e_1$ and $e_2$ are both incident with the planes
		$Q_1$ and $Q_2$. 
		Let $u_1$ and $u_2$ be the vertices in $Q_1$ that are incident with
		$e_1$ and $e_2$, respectively. 
		
		If none of $Q_1$ and $Q_2$ contain precolored edges,
		then we can select independent edges in $Q_1$ (and $Q_2$)
		that are incident with $u_1$ and $u_2$, respectively, and color
		them $c_1$ and $c_2$. Then we may proceed as in Case 3
		of Lemma \ref{cl:ODDoneempty} to obtain an extension of the resulting precoloring
		$\varphi'$ of $G-E(D_1)$, and thereafter we  obtain an extension of $\varphi$,
		as before.
		
		\bigskip

		Let us now assume that all precolored edges are contained in $Q_1 \cup Q_2$.
		We first prove the following claim.
		
		\begin{claim}
		\label{cl:edge}
			Suppose $d \geq 3$.
			At least one of the following two statements hold.
			
			(i) There is an edge
			$e'_1 \in E(Q_1)$ incident with $u_1$ but not $u_2$, such that
			$e'_1$ and the corresponding edge $e''_1$ of $Q_2$ are uncolored
			and not adjacent to any edge colored $c_1$.
			
			(ii) There is an edge $e'_2 \in E(Q_1)$ incident with $u_2$ but not $u_1$,
			such that $e'_2$ and the corresponding edge $e''_2$ of $Q_2$ are uncolored
			and not adjacent to any edge colored $c_2$.
		\end{claim}
		\begin{proof}
			Suppose that (i) is false. Since $G-E(D_1)$ is triangle-free and $(2d-2)$-regular, 
			there are 
			$2d-3$ edges $a_1, \dots, a_{2d-3} \in E(Q_1)$ incident with $u_1$,
			all of which are either precolored, adjacent to an edge colored $c_1$,
			or satisfies that the corresponding edge of $Q_2$ satisfies one of these conditions.
			Since $Q_1 \cup Q_2$ contains $2d-2$ precolored edges, it is
			easy to see that then (ii) must hold, so
			there is an edge $e'_2$ as desired.
		\end{proof}
			If $d = 2$, we note that the claim might fail if $u_1$ and $u_2$
			are adjacent via an uncolored edge. However, in this case, it is trivial
			to verify that $\varphi$ is extendable, since every precoloring of an
			odd cycle is extendable using $3$ colors.
			
			Suppose now that  (ii) of Claim \ref{cl:edge} holds, and let $e'_2$ and $e''_2$
			be corresponding edges of $Q_1$ and $Q_2$ respectively, 
			as described in the claim.
			From the restriction of $\varphi$ to $Q_1 \cup Q_2$ 
			we define a new precoloring
			$\varphi'$ of $Q_1 \cup Q_2$ by in addition coloring $e'_2$ and 
			$e''_2$ by the color $c_2$. We may now proceed as in the subcase of Case 2 of 
			the proof of Lemma \ref{cl:ODDnofree} when all the precolored edges
			are contained in $Q_1 \cup Q_2$, to obtain an extension of $\varphi'$
			(with $c_3$ in place of $c_2$). Thereafter, we swap colors on a
			bicolored $4$-cycle containing $e_2$ to obtain an extension of $\varphi$.
			
				\bigskip
				
	It remains to consider the case when all precolored edges are contained 
	in $Q_1$ and $Q_i$,
	where $i \neq 2$. Then either $Q_3$ or $Q_{2k+1}$ contains no precolored
	edges, say $Q_3$. 
			
	Suppose first that $Q_1$ contains at most $2d-4$ precolored edges.
	Then since $Q_1$ is $(2r-2)$-regular,  there are independent uncolored 
	edges $e'_1 \in E(Q_1)$ and $e'_2 \in E(Q_1)$
	incident with $u_1$ and $u_2$, respectively, and such that
	$e'_1$ is not adjacent to any edge of $Q_1$ $\varphi$-colored $c_1$,
	and $e'_2$ is not adjacent to any edge of $Q_1$ $\varphi$-colored $c_2$.
			
	From the restriction of $\varphi$ to $Q_1$
	we define a new precoloring $\varphi'$ of $Q_1$ by in addition coloring $e'_1$
	by the color $c_1$, and $e'_2$ by the color $c_2$.
	Next, we take an extension of $\varphi'$ using colors 
	$\{1,\dots, 2d+1\} \setminus \{c_3,c_4\}$, and color $Q_2$ and $Q_3$
	correspondingly to how $Q_1$ is colored.
	Thereafter, we color the edges of $D_1$ as follows: color all edges between
	$Q_2$ and $Q_3$ by a color in $\{1,\dots, 2d+1\} \setminus \{c_3,c_4\}$
	missing at its endpoints, and color all other edges of $D_1$ alternately
	using colors $c_3$ and $c_4$ so that all edges between $Q_1$ and $Q_2$
	are colored $c_3$. Thereafter, we color the planes $Q_4,\dots, Q_{2k+1}$
	with colors $\{1,\dots, 2d+1\} \setminus \{c_3,c_4\}$
	so that the coloring agrees with $\varphi$. Now we may obtain an extension of
	$\varphi$ by swapping colors on two bicolored $4$-cycles containing
	$e_1$ and $e_2$, respectively.

	Suppose now that $Q_1$ contains exactly $2d-3$ precolored edges. 
	Then $Q_i$ contains exactly one precolored edge.
			Moreover, as in the proof of Claim \ref{cl:edge}
			it is straightforward that there is 
			\begin{itemize}
			
			\item either an edge $e'_1$ satisfying
			(i) of Claim \ref{cl:edge}, or
			
			\item an edge $e'_2$ satisfying
			(ii) of Claim \ref{cl:edge}.
			
			\end{itemize}
			Suppose e.g. that (i) holds. Then from the restriction of $\varphi$ to $Q_1$
			we define a new precoloring $\varphi'$ of $Q_1$ by in addition coloring $e'_1$
			by the color $c_1$ and removing the color from any edge of $Q_1$ that is
			colored $c_2$.
			
			Next, we take an extension of $\varphi'$ using colors 
			$\{1,\dots, 2d+1\} \setminus \{c_2,c_3\}$, recolor the edges $\varphi$-colored
			$c_2$ by the color $c_2$, and thereafter color $Q_2$ and $Q_3$
			correspondingly to how $Q_1$ is colored. Denote the obtained coloring by $f$.
			We color all edges between $Q_1$ and
			$Q_2$ by the color $c_3$ except $e_2$ which is colored $c_2$, and color
			the edges between $Q_2$ and
			$Q_3$ by a color in $\{1,\dots, 2d+1\} \setminus \{c_2,c_3\}$
			missing at its endpoints. 
			
	Now, let $e''_i$ be the edge of $Q_i$ that is precolored, and let 
	$e''_1$ be the corresponding edge of $Q_1$. If $f(e''_1) = \varphi(e''_i)$,
	then we color all hitherto uncolored 
	planes correspondingly to how $Q_1$ is colored, therafter
	color the remaining uncolored
	edges of $D_1$ and finally swap colors on a bicolored $4$-cycle containing $e_1$
	to obtain an extension of $\varphi$.
			
	Otherwise, if $f(e''_1) \neq \varphi(e''_i)$, then we color all other planes
	correspondingly to how $Q_1$ is colored, except that we permute the colors
	$f(e''_1)$ and $\varphi(e''_i)$ in the colorings. 
	We may now apply similar arguments as before 
	to obtain an extension of $\varphi$; we leave the details to the reader.

		\bigskip

			{\bf Subcase 2.2.2} {\em $e_1$ and $e_2$ are incident with
		exactly one common plane:}

		Suppose that $e_1$ is incident with $Q_1$ and $Q_2$, and $e_2$ with
		$Q_1$ and $Q_{2k+1}$. Let $u_1$ and $u_2$ be the vertices of $Q_1$
		that are incident with $e_1$ and $e_2$, respectively.
		If neither of $Q_1, Q_2, Q_{2k+1}$ contain precolored edges, then a similar
		argument as in the second paragraph of Subcase 2.2.1 applies.
		Thus it suffices to consider the following subcases:
		\begin{itemize}
			
	\item[(a)] All precolored edges of $G-E(D_1)$ are contained in $Q_1 \cup Q_2$.
			
	\item[(b)] $Q_1$ contains precolored edges, but neither of $Q_2$ and $Q_{2k+1}$.
			
	\item[(c)] $Q_2$, but not $Q_1$ or $Q_{2k+1}$, contains precolored edges.
			
	\item[(d)] All precolored edges of $G-E(D_1)$ are contained in $Q_2 \cup Q_{2k+1}$.
		
		\end{itemize}
		By symmetry, it suffices to consider these cases.
		
		\bigskip
		
		{\em (a) holds:}
		
		We first consider the case when there is an edge $e'_1 \in E(Q_1)$
		adjacent to $e_1$, such that both $e'_1$
		and the corresponding edge $e'_2 \in E(Q_2)$ are uncolored and not adjacent
		to any edge precolored $c_1$. 
		If this holds, then from the restriction of $\varphi$ to
		$Q_1 \cup Q_2$ we define a new precoloring $\varphi'$ by coloring
		$e'_1$ and $e'_2$ by the color $c_1$, and also removing the color from
		any edge that is $\varphi$-colored $c_2$.
		
		By the induction hypothesis, there is an extension of $\varphi'$
		using colors $\{1,\dots, 2d+1\} \setminus \{c_2,c_3\}$. 
		From $\varphi'$, we obtain an edge coloring $f$ of $Q_1 \cup Q_2$
		by recoloring
		the edges of $Q_1$ and $Q_2$ that are $\varphi$-colored 
		$c_2$ by the color $c_2$. We color all edges
		between $Q_1$ and $Q_2$ by the color $c_3$, all the planes
		$Q_3, \dots, Q_{2k}$ correspondingly to how $Q_2$ is colored, and
		$Q_{2k+1}$ correspondingly to how $Q_1$ is colored.
		Now the edges between $Q_{2k+1}$ and $Q_{2k}$ can be colored with the color
		$c_3$, and every other edge of $D_1$ by some appropriate color missing at its endpoints.
		Thus by swapping colors on a bicolored $4$-cycle containing
		$e_1$ we obtain an extension of $\varphi$.
		
		Suppose now that there is no edge $e'_1 \in E(Q_1)$ adjacent to $e_1$,
		such that both $e'_1$
		and the corresponding edge $e'_2 \in E(Q_2)$ are uncolored and not adjacent
		to any edge precolored $c_1$. Then, since $Q_1 \cup Q_2$ is $(2d-2)$-regular
		and contains $2d-2$ precolored edges, $u_1$ is incident with $2d-2$
		edges $a_1,\dots, a_{2d-2}$ such that each $a_i$,
		or the corresponding edge of $Q_2$, is $\varphi$-colored by a color
		distinct from $c_1$, or uncolored and adjacent to an edge 
		$\varphi$-colored $c_1$. 
		In particular, if there is an edge of
		$Q_1 \cup Q_2$ colored $c_2$, then at most one edge 
		in each of $Q_1$ and $Q_2$
		is colored $c_2$.
		Moreover, since $G$ is triangle-free
		and $Q_1 \cup Q_2$ contains exactly $2d-2$ precolored edges, an edge 
		in $Q_1 \cup Q_2$ precolored $c_2$
		is not adjacent to an edge precolored $c_1$ in $Q_1 \cup Q_2$.
		
		If $Q_1$ contains an edge $a$ precolored $c_2$,
		then from the restriction of $\varphi$ to $Q_1 \cup Q_2$, we define a precoloring
		$\varphi'$ by recoloring $a$ and also the corresponding edge of $Q_2$ by
		the color $c_1$. Otherwise, if both $Q_1$ and $Q_2$ contain edges precolored $c_2$,
		then we define $\varphi'$ by recoloring these edges by the color $c_1$.
		Now, by the induction hypothesis, 
		there is an extension of the coloring $\varphi'$ using colors 
		$\{1,\dots, 2d+1\} \setminus \{c_2, c_3\}$. By recoloring 
		the edges that were recolored $c_1$ by the color $c_2$
		we obtain an extension of the restriction of $\varphi$ to $Q_1 \cup Q_2$.
		Next, we color $e_1$ by the color $c_1$ and all other
		edges between $Q_1$ and $Q_2$ by the color $c_3$.
		We color $Q_{2k+1}$ correspondingly to $Q_1$, and $Q_3,\dots, Q_{2k}$
		correspondingly to how $Q_2$ is colored, and then color the hitherto uncolored
		edges of $D_1$ as before to obtain an extension of $\varphi$.
		
		On the other hand, if no edge of $Q_1$ is colored $c_2$, then from the restriction of
		$\varphi$ to $Q_1 \cup Q_2$, we define a new precoloring $\varphi'$
		by coloring all edges adjacent to $e_1$ that are not precolored or adjacent
		to an edge colored $c_1$ in $G-E(D_1)$ by an arbitrary color from
		$\{1,\dots, 2d+1\} \setminus \{c_1,c_2,c_3\}$ so that the resulting precoloring
		is proper. 
		By the induction hypothesis, the obtained precoloring
		of $Q_1$ is extendable
		to a proper coloring using colors  $\{1,\dots, 2d+1\} \setminus \{c_2,c_3\}$,
		and the precoloring of $Q_2$ is extendable using colors  $\{1,\dots, 2d+1\} \setminus \{c_3, c_4\}$,
		where $c_4$ is some arbitrary color not appearing on an edge of $Q_2$.
		Note that no edge adjacent to $e_1$ is colored $c_1$ in this coloring.
		Hence, we can color $Q_{2k+1}$ correspondingly to how $Q_1$ is colored, all edges
		between $Q_1$ and $Q_{2k+1}$ by the color $c_2$, and all edges between $Q_1$ and $Q_2$
		by the color $c_3$ except that $e_1$ is colored $c_1$. Since not other planes
		in $G-E(D_1)$ contain precolored edges, it is now straightforward
		to obtain an extension of $\varphi$ from this partial coloring.

		\bigskip

		{\em (b) holds:}
		
		Suppose  that $Q_1$ and $Q_i$ contain all precolored edges of $G-E(D_1)$,
		where $i \notin \{1,2,3, 2k+1\}$.
		We take an extension of the restriction of $\varphi$ to $Q_1 \cup Q_i$ using
		colors $\{1,\dots, 2d+1\} \setminus \{c_3,c_4\}$, color $Q_2, Q_3, Q_{2k+1}$
		correspondingly to how $Q_1$ is colored, and all remaning
		planes in $G-E(D_1)$ by the colors $\{1,\dots, 2d+1\} \setminus \{c_3,c_4\}$
		so that the coloring agrees with $\varphi$.
		Next, we color the edges of $D_1$: the edges between $Q_2$ and
		$Q_3$ we color with the color in $\{1,\dots, 2d+1\} \setminus \{c_3,c_4\}$ missing
		at its endpoints, and all other edges of $D_1$ are colored $c_3$ and $c_4$ alternately,
		and starting with color $c_3$ at $Q_2$.
		This yields a coloring that agrees with $\varphi$ except for $e_1$ and $e_2$.
		We recolor these edges by $c_1$ and $c_2$, respectively, possibly by swapping on
		one or two bicolored $4$-cycles if necessary, to obtain an extension of $\varphi$.

		\bigskip

		{\em (c) holds:}
		
		The case when  $Q_2$, but not $Q_{2k+1}$ or $Q_1$, contains
		precolored edges can be dealt with as in the preceding
		pagragraph, so we omit the details here.
		
		\bigskip

	{\em (d) holds:}
		
	If $d=2$, then it is straightforward that
	$\varphi$ is extendable, because any partial $3$-edge coloring 
	of an odd cycle is extendable.
	If $d > 2$, then
	since $Q_2 \cup Q_{2k+1}$ contains exactly $2d-2$ precolored edges,
		it is straightforward that
		there are non-corresponding edges $e'_{2k+1} \in E(Q_{2k+1})$ and 
		$e'_2 \in E(Q_2)$ that are uncolored,
		adjacent to $e_1$ and $e_2$, respectively, and not adjacent to any edges
		of $Q_{2k+1} \cup Q_{2}$ precolored $c_2$ and $c_1$, respectively.
		
		From the restriction of $\varphi$ to $Q_{2k+1} \cup Q_1 \cup Q_2$, 
		we define a new precoloring $\varphi'$ of $Q_{2k+1} \cup Q_1 \cup Q_2$
		by in addition coloring $e'_{2k+1}$ by $c_2$, $e'_2$ by the color $c_1$,
		and the corresponding edges of $Q_1$ by colors $c_2$ and $c_1$
		respectively. By the induction hypothesis, there is an extension of $\varphi'$
		to $Q_{2k+1} \cup Q_1 \cup Q_2$ using colors 
		$\{1,\dots, 2d+1\} \setminus \{c_3,c_4\}$. From this coloring we may now obtain
		an extension of $\varphi$ by proceeding as before.


		\bigskip

		{\bf Subcase 2.2.3} {\em $e_1$ and $e_2$ are not incident with
		any common plane:}
		
		Suppose that $e_1$ is incident with $Q_1$ and $Q_2$, and $e_2$ is incident
		with $Q_j$ and $Q_{j+1}$.
		As in Subcase 2.1.2, we can distinguish between the following three cases:
		\begin{itemize}
		
		\item All precolored edges are contained in $Q_1$ and $Q_2$.
		
	\item One of $Q_1$ and $Q_2$, and one of $Q_j$ and $Q_{j+1}$, contain precolored edges.
		
		\item At most one of the planes $Q_1,Q_2, Q_j, Q_{j+1}$ contains precolored edges.
		
		\end{itemize}
		Moreover, in all these three subcases we may proceed precisely as in the corresponding subcases
		of Subcase 2.1.2. We omit the details.

				\bigskip
		
		{\bf Case 3.} {\em All precolored edges of $G-E(D_1)$ are contained in at least three planes:}

		In the case when no precolored edge of $D_1$ is incident with a plane
		containing precolored edges, then it is straightforward to obtain an extension
		by selecting uncolored edges in the planes that the precolored edges of $D_1$
		are incident with,
		so throughout we assume that this is not the case. Note further
		that since $G$ contains at least five precolored edges, $d \geq 3$.
		
		\bigskip
		
		{\bf Case 3.1.} {\em The precolored edges of $D_1$ have the same color
		under $\varphi$:}
		
		Suppose that $\varphi(e_1) = \varphi(e_2) = c_1$. We consider a number
		of different subcases.
		
		 \bigskip
		 
		 {\bf Subcase 3.1.1} {\em $e_1$ and $e_2$ are incident with the same two planes:}
		
		 Assume that $e_1$ and $e_2$ are incident with the same two planes
		$Q_1$ and $Q_2$.
		If the endpoints of $e_1$ and $e_2$ are adjacent via uncolored edges
		in both $Q_1$ and $Q_2$, then we define the precoloring $\varphi'$ from
		the restriction of $\varphi$ to $Q_1 \cup Q_2$
		by coloring these edges of $Q_1$ and $Q_2$ by the color $c_1$.
		We may now proceed as in Case 3 of Lemma \ref{cl:ODDoneempty} to obtain
		an extension of $\varphi'$, and thereafter swap colors on a
		bicolored $4$-cycle to obtain an extension of $\varphi$.
		
		Otherwise, if the endpoints of $e_1$ and $e_2$ are 
		not adjacent via uncolored edges
		in both $Q_1$ and $Q_2$. 
		Then, 
		since $d >2$, $Q_1 \cup Q_2$ contains at most $2d-3$ precolored edges
		and any two vertices in $G$ are contained in at most one $5$-cycle,
		it is not hard to see that
		there are independent edges $e'_1$ and $e'_2$ in $Q_1$,
		adjacent to $e_1$ and $e_2$, respectively,
		and such that neither
		these edges, nor the corresponding edges of $Q_2$
		are precolored or adjacent to edges precolored $c_1$ in $Q_1 \cup Q_2$.
		Hence, we may
		color these edges of $Q_1$ and $Q_2$ by the color $c_1$, and then proceed
		as in the preceding paragraph to obtain an extension of $\varphi$.

		\bigskip
		
		{\bf Subcase 3.1.2} {\em $e_1$ and $e_2$ are  incident with
		one common plane:}

		Suppose that $e_1$ and $e_2$ are incident with exactly one common 
		plane $Q_1$, and that $e_1$ is also incident with $Q_2$.
		If 
		there are at most $2d-4$ precolored edges in $Q_1 \cup Q_2$ 
		and at most $2d-4$ precolored edges in
		$Q_{2k+1} \cup Q_1$,
		then there are
		independent edges $e'_1$ and $e'_2$ in $Q_1$,
		adjacent to $e_1$ and $e_2$, respectively,
		and such that neither
		these edges, nor the corresponding edges of $Q_2$
		and $Q_{2k+1}$, respectively,
		are precolored or adjacent to edges precolored $c_1$ in $Q_1 \cup Q_2 \cup Q_{2k+1}$.
		Thus we may proceed as above to obtain an extension of $\varphi$.
		
	Suppose now instead that  $Q_1 \cup Q_{2k+1}$, say, contains exactly $2d-3$
		precolored edges. If there exist 
		independent edges $e'_1$ and $e'_2$ in $Q_1$,
		as described in the preceding paragraph, then we may proceed as in that case,
		so suppose that there are no two such edges.
		
		We first consider the case when we can choose exactly one such edge, that is,
		there is an edge $e'_1 \in E(Q_1)$ adjacent to $e_1$ but not $e_2$, 
		satisfying that $e'_1$ and the corresponding
		edge of $Q_2$ are not precolored or adjacent to edges of $Q_1 \cup Q_2$ that
		are precolored $c_1$. Moreover, there is no edge $e'_2$ adjacent to
		$e_2$ with analogous properties.
		Consider the precoloring $\varphi'$
		obtained from the restriction of $\varphi$ to $Q_{2k+1} \cup Q_1 \cup Q_2$
		by in addition coloring $e'_1$ and also the corresponding edge of $Q_2$
	by the color $c_1$. Now, since there is no uncolored edge $e'_2$ as 
	described above, it follows that
	all $d-2$ edges $a_1, \dots, a_{d-2}$ adjacent to $e_2$ in $Q_1$ satisfies
	that either $a_i$, or the corresponding edge of $Q_{2k+1}$, is $\varphi'$-precolored
	or adjacent to an edge colored $c_1$ under $\varphi'$.
	Moreover, since $Q_{2k+1} \cup Q_1$ contains at most $2d-2$ 
	$\varphi'$-precolored
	edges, every precolored edge of $Q_{2k+1} \cup Q_1$ satisfies this condition.
	Thus by properly coloring the uncolored edges adjacent to $e_2$, except the ones 
	that are adjacent to edges in $G-E(D_1)$ colored $c_1$,		by colors from 
	$\{1,\dots, 2d+1\} \setminus \{c_1, c_3,c_4\}$, we obtain a precoloring $\varphi''$
	from $\varphi'$. Note that any extension of the restriction of $\varphi''$ to
	$Q_{2k+1} \cup Q_1$ using colors $\{1,\dots, 2d+1\} \setminus \{c_3,c_4\}$ 
	does not use $c_1$ on an edge
	adjacent to $e_2$. Thus, since there is exactly one
	precolored edge in $G-E(D_1)$ that is not contained in $Q_{2k+1}\cup Q_1$,
	we may once again proceed as in Case 3 of 
	Lemma \ref{cl:ODDoneempty}
	to obtain a proper $(2d+1)$-edge coloring of $G$ that agrees with $\varphi''$,
	where no edge adjacent to $e_2$ is colored $c_1$, and where we color the edges 
	of $D_1$ so that all edges between any two given planes are colored by a 
	fixed color not appearing in these
	two planes.
		Thereafter we can swap colors on a
		bicolored $4$-cycle and recolor $e_2$ to obtain an extension of $\varphi$.
		
		Suppose now that neither an edge $e'_1$, nor an edge $e'_2$
		as described above exist in $Q_1$. Then the endpoints $u_1$ and $u_2$ of $e_1$ and $e_2$ in $Q_1$,
		respectively, are adjacent via an uncolored edge $e$ in $Q_1$, and the corresponding edges of
		$Q_{2k+1}$ and $Q_2$ are uncolored.
		Moreover, since both $Q_{2k+1} \cup Q_1$
		and $Q_1 \cup Q_2$ contains at most $2d-3$ precolored edges, it follows
		that there are $2d-4$ edges precolored $c_1$ in $Q_1$, the endpoints of which are
		adjacent to $u_1$ and $u_2$, respectively.
		Moreover, $Q_{2k+1}$ contains exactly one precolored edge, 
		and $Q_2$ contains exactly one precolored edge, so all precolored
		edges of $G-E(D_1)$ are contained in $Q_{2k+1} \cup Q_1 \cup Q_2$.
		
	Now, from the restriction of $\varphi$ to $Q_{2k+1}$
	we define a new precoloring $\varphi'$ by coloring all edges of $Q_{2k+1}$
	corresponding to edges of $Q_1$ colored $c_1$, by the color $c_1$,
	and thereafter color every edge adjacent to $e_2$ in $Q_{2k+1}$
	that is neither precolored
	nor adjacent to any edge precolored $c_1$ by an arbitrary color in 
	$\{1,\dots, 2d+1\} \setminus \{c_1, c_3,c_4\}$ so that the 
	resulting coloring is proper. 
	Next, we take an extension of $\varphi'$ using colors 
	$\{1,\dots, 2d+1\} \setminus \{c_3,c_4\}$. (Note that no edge adjacent
	to $e_2$ is colored $c_1$ in this extension.)
	Thereafter we color $Q_1$ correspondingly to how $Q_{2k+1}$ is colored 
	except that all edges colored $c_1$
	that are not precolored $c_1$ under $\varphi'$ are recolored $c_3$. 
	Denote the obtained partial coloring of $G$ by $f$.

	Now, if the precolored edge $b_2$ of $Q_2$ is colored $c_1$ under $\varphi$,
	then we color $Q_2$ correspondingly to how
	$Q_1$ is colored under $f$, except that the edge 
	$\varphi$-precolored $c_1$ is colored $c_1$.
	Therafter, we color $Q_3,\dots, Q_{2k}$ correspondingly to how 
	$Q_1$ is colored. We
	may now apply Lemma \ref{lem:oddcycle} to color the edges of $D_1$ 
	and thus obtain an extension of $\varphi$. (Since
	$e_1$ and $e_2$ are contained in different  cycles of $D_1$, 
	we can choose the coloring
	of $D_1$ so that it agrees with $\varphi$.)
	Otherwise, if $b_2$ is colored $c_5 \neq c_1$, then the corresponding 
	edge of $Q_1$
	is not colored $c_1$. We color $Q_2$ correspondingly to how $Q_1$ is colored
	except that
	we permute the colors $c_5$
	and the color of the corresponding edge of $Q_1$ under $f$. Again, we color 
	all the planes $Q_3,\dots, Q_{2k}$ correspondingly to how $Q_1$ is colored,
	and apply Lemma \ref{lem:oddcycle} to obtain an extension of $\varphi$.

		\bigskip
		
		{\bf Subcase 3.1.3} {\em $e_1$ and $e_2$ are not incident with
		any common plane:}
		
		Suppose that $e_1$ is incident with $Q_1$ and $Q_2$
		and $e_2$ is incident with $Q_j$ and $Q_{j+1}$, $j> 2$.
	Since each $Q_i$ is $(2d-2)$-regular and any pair of adjacent planes contain
	at most $2d-3$ precolored edges, it is straightforward that there are corresponding
		uncolored edges $e'_1 \in E(Q_1)$, $e'_2 \in E(Q_2)$ adjacent to $e_1$ but not to any
		other edge precolored $c_1$, and similarly for $e_2$.
		Hence, we may proceed as in Case 3 of Lemma \ref{cl:ODDoneempty} to obtain
		an extension of a precoloring $\varphi'$ of $G-E(D_1)$
		defined from $\varphi$ by coloring the selected edges adjacent to $e_1$ and $e_2$, respectively,
		by the color $c_1$ and removing the color $c_1$ from $e_1$ and $e_2$.
		From the extension of $\varphi'$,
		we obtain an extension of $\varphi$
		as before.
		
		\bigskip

		{\bf Case 3.2.} {\em The precolored edges of $D_1$ have different colors
		under $\varphi$:}
		
		Suppose that $\varphi(e_1) = c_1$ and $\varphi(e_2) = c_2$.
		
		\bigskip
		
		{\bf Subcase 3.2.1} {\em $e_1$ and $e_2$ are both incident with
		two common planes:}
		
		Assume that
		$e_1$ and $e_2$ are both incident with the same pair of planes $Q_1$
		and $Q_2$. Let $u_1$ and $u_2$ be the vertices of $Q_1$ that
		are incident with $e_1$ and $e_2$, respectively.
		
		If $Q_1 \cup Q_2$ contains at most $2d-4$ precolored edges, then
		there are  independent
		edges $e'_1 \in E(Q_1)$ and $e'_{2} \in E(Q_{1})$
		that are incident with $u_1$ and $u_2$, respectively, 
		such that neither $e'_1$ nor the corresponding edge $e''_1$ of $Q_2$ 
		is precolored or adjacent to an edge precolored $c_1$ in $Q_1 \cup Q_2$, and similarly
		for $e'_2$, the corresponding edge $e''_2$ of $Q_2$ and $c_2$.
		Thus, from the restriction
		of $\varphi$ to $Q_1 \cup Q_2$ we may define a new precoloring $\varphi'$
		by coloring these four edges by $c_1$ and $c_2$, respectively.
		We may now proceed as in Case 3 of Lemma \ref{cl:ODDoneempty} to obtain
		an extension of $\varphi'$, and thereafter we can obtain an extension of $\varphi$
		by swapping colors on two bicolored $4$-cycles.
		
		Suppose now that $Q_1 \cup Q_2$ contains $2d-3$ precolored edges, 
		so exactly one plane $D_i$, $i \neq 1,2$
		has exactly one precolored edge $a_i$; we assume $i \neq 3$.
		Then as in Claim \ref{cl:edge}, 
		there is either 
		\begin{itemize}
		
		\item[(i)] an edge
		$e'_1 \in E(Q_1)$ incident with $u_1$ but not $u_2$, such that
		$e'_1$ and the corresponding edge $e''_1$ of $Q_2$ are uncolored,
		and not adjacent to any edge  in $Q_1 \cup Q_2$ colored $c_1$, or
			
		\item[(ii)] an edge $e'_2 \in E(Q_1)$ incident with $u_2$ but not $u_1$,
			such that $e'_2$ and the corresponding edge $e''_2$ of $Q_2$ are uncolored,
			and not adjacent to any edge  in  $Q_1 \cup Q_2$ colored $c_2$.
		\end{itemize}
		
		Suppose e.g. that (ii) holds. Then we 
		define a new precoloring $\varphi'$ from the restriction of $\varphi$ to $Q_1 \cup Q_2$
		by coloring $e'_2$ and $e''_2$ by the color $c_2$. By removing
		the color $c_1$ from any edge that is colored $c_1$ under $\varphi'$, we obtain the
		precoloring $\varphi''$ of $Q_1 \cup Q_2$. Next, we
		take an extension of $\varphi''$ using colors $\{1,\dots, 2d+1\} \setminus \{c_1,c_3\}$,
		and then recolor all edges that are $\varphi$-colored $c_1$ by the color $c_1$
		to obtain the coloring $f$ which is an extension of $\varphi'$.
		We color all edges between $Q_1$ and $Q_2$
		by the color $c_3$ except $e_1$ which is colored $c_1$,
		color $Q_3$ correspondingly to how $Q_2$ is colored, and color
		the edges between $Q_2$ and $Q_3$
		by a color in $\{1,\dots, 2d+1\} \setminus \{c_1,c_3\}$
		missing at its endpoints.
		
		Next, consider the precolored edge $a_i$ of $Q_i$, and the corresponding edge
		$a_1$ of $Q_1$. If $f(a_1) =\varphi(a_i)$, then we color all the planes 
		$Q_4, \dots, Q_{2k+1}$
		correspondingly to how $Q_1$ is colored. Thereafter, we 
		color the edges between $Q_3$ and $Q_4$ similarly to how the edges between $Q_1$ and $Q_2$ 
		are colored, and then
		color the remaining uncolored paths in $D_1$ using two colors not appearing
		at the endpoints of these paths.
		Finally, we swap colors on a bicolored $4$-cycle containing $e_2$ to obtain an extension
		of $\varphi$.

		Otherwise, if $f(a_1) \neq \varphi(a_i)$, then 
		we color the planes $Q_4, \dots, Q_{2k+1}$ correspondingly to how $Q_1$ is colored, 
		except that we permute the colors $f(a_1)$ and
		$\varphi(a_i)$. Then we 
		color the edges between $Q_3$ and $Q_4$ with the color $c_3$,
		and consider the subgraph $H$ consisting of  the
		edges of $D_1$ with endpoints in two consecutive planes in the
		sequence $Q_4, \dots, Q_{2k+1}, Q_1$.
		If we define a list assignment for these edges by for every edge including
		the colors from $\{1,\dots, 2d+1\}$ that do not appear on any adjacent edges, then
		each edge, except the ones with endpoints in $Q_1$ and $Q_{2k+1}$, gets a list of size
		at least two. Hence, $H$ is list edge colorable from these lists.
		This yields an edge coloring of $G$ that agrees with $\varphi$ except for $e_2$.
		Finally, we swap colors on a bicolored $4$-cycle containing $e_2$ to obtain an extension
		of $\varphi$.

		\bigskip
		
		{\bf Subcase 3.2.2} {\em $e_1$ and $e_2$ are incident with
		exactly one common plane:}
		
		Suppose now instead that $e_1$ and $e_2$ are incident to 
		exactly one common plane, say $Q_1$,
		where $e_1$ in addition also is incident with $Q_2$.
		If there are uncolored corresponding
		edges $e'_1 \in E(Q_1)$ and $e'_{2k+1} \in E(Q_{2k+1})$ 
		that are incident with $e_2$ but not to any other edge
		precolored $c_2$, and similar edges for $e_1$ and the color $c_1$
		in $Q_1$ and $Q_2$, respectively, which are disjoint from $e'_1$,
		then we proceed as above:
		we can obtain an extension by coloring the edges adjacent to $e_1$ and $e_2$ by colors
		$c_1$ and $c_2$, respectively, and
		then proceed as in  Case 3 of Lemma \ref{cl:ODDoneempty},
		as before.
		
		Now, any two adjacent planes
		contain at most $2d-3$ precolored edges,
		so if there are no edges as described in the preceding paragraph, then
		all precolored edges are contained in $Q_{2k+1} \cup Q_1 \cup Q_2$,
		and $e_1$ and $e_2$ are adjacent to a common vertex $u_1 \in V(Q_1)$.
		Moreover, $u_1$ is incident with $2d-4$ edges colored by distinct colors from
$\{1,\dots, 2d+1\} \setminus \{c_1,c_2,c_3,c_4\}$, 
$Q_{2k+1}$ contains exactly one
precolored edge, and $Q_2$ contains exactly one precolored edge. 
Moreover, these precolored edges in $Q_{2k+1} \cup Q_2$ are 
either adjacent to vertices corresponding
		to $u_1$, or colored $c_2$ and $c_1$ respectively, and adjacent to edges that
		are incident with $u_1$.
		We consider some different cases, depending on the colors of the precolored
		edges of $Q_1$ and $Q_2$.
		
		Suppose first that the precolored edge of $Q_{2k+1}$ is colored $c_2$,
		and that $Q_2$ contains an edge precolored $c_1$. 
		We color all edges of $Q_{2k+1}$ adjacent to $e_2$ that are not precolored
		or adjacent to an edge precolored $c_2$ by arbitrary colors from 
		$\{1,\dots, 2d+1\} \setminus \{c_1, c_2,c_3\}$ so that the resulting precoloring is proper, 
		and similarly for $Q_2$ but with $c_1$
		in place of $c_2$.
		Next, we take an extension of the resulting precoloring $\varphi'$
		of $Q_{2k+1} \cup Q_1 \cup Q_2$, where we use colors 
		$\{1,\dots, 2d+1\} \setminus \{c_1,c_3\}$ for $Q_{2k+1}$, 
		$\{1,\dots, 2d+1\} \setminus \{c_1,c_2\}$ for $Q_1$,
		and $\{1,\dots, 2d+1\} \setminus \{c_2,c_3\}$ for $Q_2$.
		We then color the edges between $Q_{2k+1}$ and $Q_1$ by $c_1$ except $e_2$ which is colored $c_2$,
		the edges between $Q_{1}$ and $Q_2$ by $c_2$ except $e_1$ which is colored $c_1$.
		Next, we color the planes $Q_3,\dots, Q_{2k}$ correspondingly using colors
		$\{1,\dots, 2d+1\} \setminus \{c_3,c_4\}$, and  all remaining uncolored edges by $c_3, c_4$ alternately.
		This yields an extension of $\varphi$.
		
		Now, if one of the colors $c_1$ and $c_2$ does not appear in $G-E(D_1)$, say $c_2$, 
		then from the restriction of $\varphi$ to $Q_1 \cup Q_2$, we 
		define a new precoloring $\varphi'$ by properly coloring all the edges adjacent to $e_1$ that
		are not precolored or adjacent to an edge colored $c_1$ by arbitrary colors
		in $\{1,\dots, 2d+1\} \setminus \{c_1,c_2,c_3\}$ so that the resulting coloring is proper.
		We take an extension of $\varphi'$ using colors 
		$\{1,\dots, 2d+1\} \setminus \{c_2,c_3\}$, and
		an extension of the restriction of $\varphi$ 
		to $Q_{2k+1}$
		using colors $\{1,\dots, 2d+1\} \setminus \{c_2,c_3\}$. 
		Thereafter, we color all edges between $Q_{2k+1}$ and $Q_1$ by the color $c_2$,
		the edges between $Q_1$ and $Q_2$ by the color $c_3$ except that $e_1$ is colored $c_1$.
		Since no other planes in $G-E(D_1)$ contain precolored edges, it is now straightforward
		to construct an extension of $\varphi$ from the obtained partial edge coloring of $G$.

		Finally, if the precolored edge of $Q_{2k+1}$ is colored $c_1$, and the edge of $Q_2$ is
		colored $c_2$, then we proceed similarly, but simply take extensions of the restriction of
		$\varphi$ to $Q_{2k+1}$ using colors $\{1,\dots, 2d+1\} \setminus \{c_2,c_3\}$,
		of the restriction of $\varphi$ to $Q_1$ using colors 
		$\{1,\dots, 2d+1\} \setminus \{c_1,c_2\}$
		and of the restriction of
		$\varphi$ to $Q_{2}$ using colors $\{1,\dots, 2d+1\} \setminus \{c_1,c_3\}$.

		\bigskip
		
		{\bf Subcase 3.2.3} {\em $e_1$ and $e_2$ are not incident with
		any common plane:}
		
		It remains to consider the case when  $e_1$ and $e_2$ are not incident to
		a common plane. Here we may proceed precisely as in Subcase 3.1.3, so once
		again we omit the details.
		This concludes the proof of this lemma.
		\end{proof}
		

		\bigskip


\section{Extending a precoloring of a distance-$4$ matching in $C^d_{2k}$}

In this last section we consider the problem of extending a precoloring of 
$C^d_{2k}$ 
where the precolored edges form a matching.

\begin{theorem}
If $\varphi$ is a $2d$-edge coloring of a distance-$4$ matching of $G = C^d_{2k}$, 
then $\varphi$ can be extended to a proper $2d$-edge coloring of $G$.
\label{D3M}
\end{theorem}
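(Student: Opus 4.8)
The plan is to start from a proper $2d$-edge colouring of $G$ --- which exists because $G = C^d_{2k}$ is $2d$-regular and bipartite (a cartesian product of bipartite graphs is bipartite), hence Class~1 by K\"onig's edge-colouring theorem --- and then to repair it, one precoloured edge at a time, by interchanges on bichromatic subgraphs, the distance-$4$ hypothesis being what makes these repairs essentially independent of one another. Concretely, I would mirror the structure of the previous sections and argue by induction on $d$: fix a dimension $D_1$ and decompose $G - E(D_1)$ into the planes $Q_1,\dots,Q_{2k}$, each isomorphic to $C^{d-1}_{2k}$ and cyclically joined by the layers of $D_1$. The precoloured edges split into those contained in some plane and those lying in $D_1$. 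Since the $Q_i$ are isometric in $G$, each plane contains a distance-$4$ matching of precoloured edges, to which the induction hypothesis applies, and no layer of $D_1$ is incident with more than one precoloured edge, so the sub-instances decouple.

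First I would colour the planes: using the inductive form of the theorem, extend the precoloured edges inside each $Q_i$ to a proper $(2d-2)$-edge colouring, chosen so that the two colours reserved for $D_1$ (say $2d-1$ and $2d$) are kept free wherever possible and so that consecutive planes are mutually compatible across the layers. The layers of $D_1$ are disjoint copies of $C_{2k}$, so they can then be coloured from the two remaining colours by Lemma~\ref{lem:evencycle}, which also disposes of any precoloured edge of $D_1$ whose colour lies in $\{2d-1,2d\}$. The genuinely delicate case is a precoloured edge $e$ whose prescribed colour $c$ does not match the colour class available at its location --- for instance a $D_1$-edge precoloured from $\{1,\dots,2d-2\}$, or a plane edge precoloured $2d-1$ or $2d$. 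Such an $e$, with current colour $a$, lies on a unique maximal $(a,c)$-bichromatic subgraph, which, since we have a full colouring of a $2d$-regular graph, is a single cycle $Z$; interchanging $a$ and $c$ along $Z$ gives $e$ the colour $c$ and keeps the colouring proper.

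The main obstacle is controlling these interchanges: an interchange along $Z$ disturbs an already-corrected precoloured edge $e'$ only if $e'\in Z$, which forces the prescribed colour of $e'$ to lie in $\{a,c\}$, but a~priori $Z$ is a global object that could thread through several precoloured edges. The heart of the proof is therefore to show, using the distance-$4$ separation together with the abundance of short cycles in $C^d_{2k}$ when $d\ge 2$ (every edge lies in many $4$-cycles spanned by two distinct dimensions), that the repairs can be localised: the radius-$1$ balls around distinct precoloured edges are pairwise disjoint and pairwise non-adjacent, so a bounded local recolouring inside one ball never conflicts with another, and one can either take the bichromatic cycles disjoint from the other precoloured edges or, in the residual cases, precede an interchange by such a local recolouring that moves the offending edge's current colour to a safe one. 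I expect essentially all the work to be in this bookkeeping --- selecting the colour classes of the planes, fixing the order in which precoloured edges are corrected, and verifying that the $(a,c)$-cycles avoid the previously treated edges --- which is precisely what the remark following the theorem in the introduction ("the argument relies heavily on the fact that $C^d_{2k}$ is Class~1") is pointing at.
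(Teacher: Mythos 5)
Your high-level plan (start from a proper $2d$-edge colouring and repair it edge by edge via interchanges, with the distance-$4$ condition decoupling the repairs) is the right one, but the proposal stops exactly where the proof has to begin. You correctly identify the central difficulty --- that a maximal $(a,c)$-bichromatic cycle $Z$ through a precoloured edge is a \emph{global} object, so the distance-$4$ hypothesis by itself gives no control over which other precoloured edges $Z$ threads through --- and then you defer its resolution to unspecified ``bookkeeping.'' That is the gap. The missing idea, which is the whole content of the paper's argument, is the \emph{choice of the initial colouring}: colour the edges of each dimension $D_j$ with its own private pair of colours $\{2j-1,2j\}$, so that corresponding edges in different layers receive the same colour. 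Since every $4$-cycle of $C^d_{2k}$ consists of two pairs of corresponding edges from two dimensions (or lies inside a single layer when $2k=4$), this makes \emph{every} $4$-cycle of $G$ bicoloured. Each precoloured edge $e$ can then be given its prescribed colour $\varphi(e)$ by at most three swaps on bicoloured $4$-cycles: if $\varphi(e)$ already appears on a $4$-cycle through $e$, one swap suffices; otherwise $\varphi(e)$ sits on the two neighbours of $e$ in its own dimension, and two preliminary $4$-cycle swaps clear it before the final swap. All edges touched lie within distance $1$ of $e$, so distance $4$ between precoloured edges makes the repairs edge-disjoint --- no global bichromatic cycles are ever needed. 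Your appeal to ``the abundance of $4$-cycles'' gestures at this but does not deliver it: under an arbitrary proper colouring those $4$-cycles are not bicoloured, so swapping on them does not preserve properness.

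Two further problems with the proposed inductive scaffolding. First, the claim that ``no layer of $D_1$ is incident with more than one precoloured edge'' is false for large $k$: a layer is a copy of $C_{2k}$, which can contain or meet several matching edges at pairwise distance at least $4$. Second, the induction does not mesh with the colours: a plane edge precoloured $2d-1$ or $2d$, or a $D_1$-edge precoloured from $\{1,\dots,2d-2\}$, cannot be handled by the inductive plane colourings plus Lemma~\ref{lem:evencycle}, and these are exactly the cases you label ``delicate'' without resolving. The paper's proof avoids induction on $d$ entirely; the explicit product colouring above is both the substitute for it and the resolution of the localisation problem.
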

\begin{proof}
Let $\varphi$ be a $2d$-edge precoloring of a distance-$4$ matching $M$ of $G$,
and let $D_1, \dots D_k$ be the dimensions of $G$.
We define the edge coloring $f$ of $G$ by properly coloring all edges of
$D_j$ by $2j-1$ and $2j$, so that all corresponding edges have the same color.
The resulting coloring satisfies that
every $4$-cycle in $G$ is bicolored since corresponding edges have the same color. 

We shall describe a procedure for obtaining a required coloring $f'$  that agrees
with $\varphi$. 
For all precolored edges we shall use transformations on some bicolored $4$-cycles.
As we shall see, if $e, e' \in M$, then the cycles used for transformations 
involving $e$ will be edge-disjoint from cycles used for $e'$.

Consider an arbitrary precolored edge $e \in M$. We consider some different cases.
\begin{itemize}

\item[(i)] If $f(e) = \varphi(e)$, then we are done;

\item[(ii)] If $f(e) \neq \varphi(e)$, and there is a bicolored 
$4$-cycle containing $e$, and where color $\varphi(e)$ appears, then we interchange colors on this bicolored $4$-cycle;

\item[(iii)] If none of the two previous conditions hold, then there are 
two edges $e_1$ and $e_2$, both of which are adjacent to $e$, 
and contained in the same dimension as $e$, such that $\varphi(e)=f(e_1) = f(e_2)$. 
By interchanging colors on two disjoint $4$-cycles, containing $e_1$ and $e_2$ respectively, we obtain a coloring $f_1$, where $e$ is contained in a 
bicolored $4$-cycle with the color $\varphi(e)$. 
Thus by interchanging colors on this $4$-cycle,
we obtain a coloring $f_2$ satisfying that $f_2(e)=\varphi(e)$.
\end{itemize}

Note that all edges used in the transformations (i) - (iii) are at distance 
at most $1$ from $e$. Thus if $e$ and $e'$ are distinct edges of $M$, and we perform
one of the transformations (i)-(iii) for both edges, then the edges involved in the
transformations concerning $e$ will be edge disjoint 
from the ones used for $e'$, since the precolored edges form a distance-$4$ matching.

Hence, we can repeat the above process for any precolored edge of $G$ to 
obtain the required coloring $f'$.
\end{proof}

We believe that Proposition~\ref{D3M} might be true if we precolor a distance-$3$ instead of a distance-$4$ matching, but if $e$ and $e'$ are distinct edges of $M$, then the edges involved in the transformations for $e$ may not necessarily 
be disjoint from the one used for $e'$, and thus we cannot apply our technique here;  we state the following conjecture.
\begin{conjecture}
If $\varphi$ is an edge precoloring of a distance-$3$ matching of $C^d_{2k}$, then $\varphi$ can be extended to a proper 
$4$-edge coloring of $C^d_{2k}$.
\end{conjecture}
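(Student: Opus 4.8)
The plan is to follow the swap-based strategy of Theorem~\ref{D3M}, starting from the same canonical coloring $f$ in which dimension $D_j$ is colored by $\{2j-1,2j\}$ with all corresponding edges receiving the same color, so that every $4$-cycle of $G$ is bicolored. (The goal is a proper $2d$-edge coloring, which for $d=2$ specializes to the $4$-edge coloring in the statement.) As noted after Theorem~\ref{D3M}, each transformation (i)--(iii) used to correct a single precolored edge $e$ touches only edges at distance at most $1$ from $e$. By an edge-distance triangle inequality, an edge lying within distance $1$ of two precolored edges forces those two edges to be at distance at most $3$; hence any two precolored edges at distance at least $4$ are corrected by edge-disjoint transformations, and the whole difficulty localizes to \emph{conflict clusters} --- connected components of the graph on the precolored edges in which two edges are joined when they are at distance exactly $3$. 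Distinct clusters lie at distance at least $4$ from each other, so their transformation regions (the edges within distance $1$ of a cluster) are edge-disjoint, and it suffices to produce, for each cluster, a proper recoloring of its transformation region that agrees with $\varphi$ on the cluster's edges; these local recolorings then combine without interference exactly as before.

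The core new step is to treat a single conflict between two precolored edges $e$ and $e'$ at distance exactly $3$. Here I would exploit the product structure: a geodesic $v_0v_1v_2v_3$ joining an endpoint of $e$ to an endpoint of $e'$ changes one coordinate at each step, so $e$, $e'$ and the connecting path are supported on at most three dimensions, and the $4$-cycles relevant to the transformations live in the corresponding pairs of dimensions. For each configuration --- classified by which dimensions contain $e$ and $e'$, whether they share a dimension, and how the geodesic distributes its three coordinate changes --- the aim is either (a) to choose the auxiliary bicolored $4$-cycles of transformation (iii) on ``opposite sides'' of $e$ and $e'$ so that they remain edge-disjoint, or, when this is impossible, (b) to perform a single coordinated interchange on the bicolored subgraph spanned by $e$, $e'$, the geodesic, and the incident $4$-cycles. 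Because $f$ is the canonical coloring, its restriction to this bounded subgraph is an explicit alternating pattern, so whether a proper recoloring agreeing with $\varphi$ on both $e$ and $e'$ exists can be settled by a finite local check rather than a global argument.

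The main obstacle is precisely the one the authors flag: when both $e$ and $e'$ require the two-cycle transformation (iii) and the required $4$-cycles share the middle edge $v_1v_2$ of the geodesic, so that a swap correcting $e$ undoes the correction at $e'$. The resolution I would pursue is to replace the two independent swaps by an interchange on a \emph{single} bicolored subgraph --- a short ladder of two or three $4$-cycles glued along shared edges, or a bicolored $6$-cycle --- that moves both target colors $\varphi(e)$ and $\varphi(e')$ into place simultaneously, and for larger clusters to propagate this gadget along the chain of distance-$3$ adjacencies. The crux is to verify that such a subgraph is always available and that its interchange stays within distance $1$ of the cluster, so that cluster-disjointness is preserved; this amounts to checking that the even girth of $C^d_{2k}$ supplies the needed bicolored cycles in every configuration and that no parity obstruction arises (the bipartiteness of $C^d_{2k}$ should be what rules out the kind of obstruction that lengthened the odd-cycle proof). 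A natural base case and fallback is to confirm the statement directly for $d=2$ on $C_{2k}\,\square\,C_{2k}$, where every conflict is a pair of edges at distance $3$ inside a bounded toroidal patch; settling that case would both establish the statement as written and furnish the local gadgets required for the clustered argument in higher dimensions.
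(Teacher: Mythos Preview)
The statement you are attempting to prove is a \emph{conjecture} in the paper, not a theorem: the authors explicitly say that their swap-based technique from Theorem~\ref{D3M} does not apply when the precolored edges form only a distance-$3$ matching, precisely because the transformation regions for two edges at distance $3$ need not be edge-disjoint, and they leave the statement open. There is therefore no proof in the paper to compare your proposal against.

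Your proposal is a reasonable outline of how one might attack the conjecture, and you have correctly identified the obstruction the authors point to, but what you have written is a strategy rather than a proof. The decisive step --- replacing two interfering transformations (iii) by a single interchange on a suitable bicolored ladder or $6$-cycle --- is asserted but never carried out: you write that ``the crux is to verify that such a subgraph is always available and that its interchange stays within distance $1$ of the cluster,'' and this verification is exactly what is missing. Moreover, conflict clusters need not be isolated pairs; at distance $3$ one can have long chains (or even branching configurations) of precolored edges, and you give no argument that your local gadget propagates consistently along such a chain without creating new conflicts at the joins. Until those case analyses are actually executed and shown to close, the proposal does not establish the conjecture; it remains open, as the paper states.
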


Note that Proposition~\ref{D3M} becomes false if we precolor a distance-$2$ matching; for instance, consider a vertex $v$ of degree $2d$ such that every edge incident with $v$ is uncolored but there is a fixed color $c \in \{ 1, \dots, 2d\}$ satisfying that every edge
incident with $v$ is adjacent to another edge colored $c$. 
If $f$ is an extension of $\varphi$, then since $v$ has degree $2d$, exactly one edge incident 
with $v$ is colored $c$, but such a coloring cannot be proper. \\

\section*{Acknowledgements}
Petros thank the International Science Program in Uppsala, Sweden, for financial support.
Casselgren was supported by a grant from the Swedish Research council VR
(2017-05077).

\end{document}